\crefname{hypothesis}{Hypothesis}{Hypotheses}
\newcommand\hl[1]{#1}
\newcommand{\fa}{\mathbb{\forall}}
\newcommand{\R}{\mathbb{R}}
\newcommand{\subspace}{\hookrightarrow}
\newcommand{\C}{\mathbb{C}}
\newcommand{\cl}{\overline}
\newcommand{\cB}{\mathcal{B}}
\newcommand{\cD}{\mathcal{D}}
\newcommand{\cF}{\mathcal{F}}
\newcommand{\cM}{\mathcal{M}}
\newcommand{\cQ}{\mathcal{Q}}
\newcommand{\cR}{\mathcal{R}}
\newcommand{\cS}{\mathcal{S}}
\newcommand{\cZ}{\mathcal{Z}}
\newcommand{\bLambda}{\boldsymbol{\Lambda}}
\newcommand{\balpha}{\boldsymbol{\alpha}}
\newcommand{\bbeta}{\boldsymbol{\beta}}
\newcommand{\bzero}{\boldsymbol{0}}
\newcommand{\bb}{\mathbf{b}}
\newcommand{\bc}{\mathbf{c}}
\newcommand{\bd}{\mathbf{d}}
\newcommand{\bff}{\mathbf{f}}
\newcommand{\bm}{\mathbf{m}}
\newcommand{\bn}{\mathbf{n}}
\newcommand{\bq}{\mathbf{q}}
\newcommand{\br}{\mathbf{r}}
\newcommand{\bs}{\mathbf{s}}
\newcommand{\bu}{\mathbf{u}}
\newcommand{\bv}{\mathbf{v}}
\newcommand{\bw}{\mathbf{w}}
\newcommand{\bx}{\mathbf{x}}
\newcommand{\by}{\mathbf{y}}
\newcommand{\bz}{\mathbf{z}}
\newcommand{\tbd}{\widetilde{\bd}}
\newcommand{\hq}{\widehat{q}}
\newcommand{\bB}{\mathbf{B}}
\newcommand{\bE}{\mathbf{E}}
\newcommand{\bI}{\mathbf{I}}
\newcommand{\bJ}{\mathbf{J}}
\newcommand{\bM}{\mathbf{M}}
\newcommand{\bQ}{\mathbf{Q}}
\newcommand{\bP}{\mathbf{P}}
\newcommand{\bS}{\mathbf{S}}
\newcommand{\bT}{\mathbf{T}}
\newcommand{\bX}{\mathbf{X}}
\newcommand{\bZ}{\mathbf{Z}}
\title{Inverse scattering for Schr\"{o}dinger equation in the frequency domain via 
data-driven reduced order modeling
\thanks{Submitted to SIIMS.
\funding{A.V. Mamonov was supported in part by the U.S. National Science
Foundation under award DMS-2309197. This material is based upon research supported in part
by the U.S. Office of Naval Research under award number N00014-21-1-2370 to A.V. Mamonov.}}}
\author{Andreas Tataris\thanks{Delft University of Technology, The Netherlands 
  (\email{A.Tataris@tudelft.nl}).}
\and Tristan van Leeuwen\thanks{Utrecht University and CWI Amsterdam, The Netherlands 
  (\email{t.vanleeuwen@uu.nl}).} 
\and Alexander V. Mamonov\thanks{Department of Mathematics, University of Houston, Houston, TX, USA 
  (\email{avmamonov@uh.edu}).}}
\begin{document}

\maketitle

\begin{abstract}
In this paper we develop a numerical method for solving an inverse scattering problem of estimating 
the scattering potential in a Schr\"{o}dinger equation from frequency domain measurements based on 
reduced order models (ROM). The ROM is a projection of Schr\"{o}dinger operator onto a subspace spanned 
by its solution snapshots at certain wavenumbers. Provided the measurements are performed at these 
wavenumbers, the ROM can be constructed in a data-driven manner from the measurements on a surface 
surrounding the scatterers. Once the ROM is computed, the scattering potential can be estimated using 
non-linear optimization that minimizes the ROM misfit. Such an approach typically outperforms the conventional
methods based on data misfit minimization. We develop two variants of ROM-based algorithms for 
inverse scattering and test them on a synthetic example in two spatial dimensions.
\end{abstract}

\begin{keywords}
Inverse scattering, full waveform inversion, frequency domain, model order reduction.
\end{keywords}

\begin{MSCcodes}
65M32, 41A20
\end{MSCcodes}

\section{Introduction}

Inverse scattering problems are ubiquitous in the areas of science and engineering where one wants to 
reconstruct either non-penetrable scatterers or to examine material properties of media without direct 
physical access to it. In this paper we focus on an inverse scattering problem that belongs to the latter 
type, of estimating the scattering potential of a Schr\"{o}dinger operator in two or more dimensions. 
Such inverse problem can be treated in the unbounded case using elegant analytical tools leading to the 
Gelfand-Levitan and Marchenko equation \cite{Cheney84}. However, in this paper we approach the solution 
of the inverse problem by combining nonlinear optimization with the techniques originating from the so-called 
reduced order models.

Reduced order models (ROM) have been used extensively in numerical analysis of PDEs as alternative to 
conventional solution methods due to their fast convergence properties and cmomputational efficiency, e.g., see 
\cite{Maday2002APC,Veroy2003APE}. In particular, projection ROMs are used to compute approximate 
solutions of PDEs as expansions in the bases of solutions to the same PDE corresponding to various 
values of the scalar parameter of the problem, e.g., time or wavenumber.
In addition, over the last several years data-driven projection ROMs were employed to construct efficient
numerical methods for solving a variety of inverse problems. These studies include inversion for coefficients
of diffusive PDEs from frequency-domain measurements, e.g., see 
\cite{borcea2020spectraldomain,borcea2014model,druskin2021lippmann,druskin2022extension},
as well as estimating coefficients of wave PDEs from time-domain data 
\cite{borcea2020reduced,borcea2021reduced,borcea2023waveform1,borcea2023waveform2,borcea2023data,
druskin2016direct,druskin2018nonlinear,mamonov2022velocity}.
For a while a question stood as to whether data-driven ROM-based approaches are applicable to
wave PDEs in the frequency domain. Recently, this question was resolved positively in 
\cite{tataris2023reduced,TristanAndreas23ROM}, where data-driven ROM techniques were successfully 
applied to the numerical solution of a classical inverse scattering problem in one spatial dimension as an 
alternative to the conventional Gelfand-Levitan-Marchenko point of view. In this work we extend this 
approach to solve numerically an inverse scattering problem for the scattering potential of Schr\"{o}dinger 
equation in two or more spatial dimensions in a bounded domain from the knowledge of frequency-domain
measurements on the boundary of the domain of interest. 

Most of the ROM-based approaches to solving inverse problems discussed above share a similar structure. 
Typically, such inversion procedures consist of two stages. In the first stage one computes from the measured
data a ROM of the PDE operator, hence the designation data-driven. The ROM is a projection of the PDE 
operator with the unknown coefficient on a subspace spanned by the solutions of the PDE for a \hl{number of 
values of} the scalar parameter, time of wavenumber depending on the measurement setting. Even though 
the projection involves an unknown PDE operator and the solutions in the bulk that are also unknown, 
certain algebraic techniques as well as the properties of PDE operators make it possible to compute such
projections from the knowledge of the data only, typically measured on or near the boundary of the domain 
of interest. In the second stage one needs to extract the information about the PDE coefficient of interest
from the data-driven ROM computed in the first stage. One option is to use nonlinear optimization to 
minimize the least squares misfit between the data-driven ROM and the reduced model computed for
a trial coefficient of the PDE. While each iteration of such optimization is as expensive as minimizing 
the least squares data misfit, a conventional approach to solving inverse problems, minimizing ROM
misfit is almost always superior to data misfit minimization. In particular, for wave problems ROM
misfit optimization objective is much better behaved (close to convex) \hl{than} data misfit objective, as shown 
in extensive numerical studies in \cite{borcea2023waveform1,borcea2023data}. Thus, ROM-based 
optimization converges much faster and is less prone to getting stuck in local minima, which leads
also to decreased sensitivity to the initial guess and ultimately to higher quality estimates of the PDE
coefficient.

We follow here the outline for ROM-based inversion described above. First, from the knowledge of 
boundary data we reconstruct the Galerkin projection of the continuous Schrö- dinger operator
onto the space spanned by the solutions of the Schrödinger equation corresponding to several 
wave numbers. This is made possible by the formulas for data-driven computation of mass and stiffness 
matrices for the Schrödinger equation in Galerkin framework. Next, the unknown Schrödinger
coefficient is estimated by solving a nonlinear optimization problem that minimizes the ROM misfit.
We explore two possible formulations, that are the two main contributions in this work. 
One minimizes the stiffness matrix misfit directly as in
\cite{TristanAndreas23ROM}, while the other transforms the stiffness matrix to a block-tridiagonal 
form using block-Lanczos process prior to minimization, as motivated by \cite{borcea2020spectraldomain,borcea2014model,tataris2023reduced}. 

The paper is organized as follows. We begin in Section~\ref{sec:prelim} by setting up the forward 
problem, as well as data model and the particular formulation of the inverse scattering problem. 
We also introduce projection ROM of the kind that is computable in a data-driven way. 
Section~\ref{sec:mainres} contains the main theoretical results as well as numerical algorithms for 
computing the ROM from the data, transforming ROM matrices to block tridiagonal form and setting 
the nonlinear optimization problem of minimizing ROM misfit to estimate the scattering potential. 
We continue in Section~\ref{sec:num} presenting the details of implementation of the proposed 
approach and the results of numerical experiments in two spatial dimensions. We conclude in
Section~\ref{sec:conclude} with a brief discussion of the results and directions for future research.

\section{Preliminaries and problem formulation}
\label{sec:prelim}

\subsection{Forward model}
\label{sec:forward}

In classical inverse scattering one considers the Schr\"{o}dinger equation
\begin{equation}
\left[ - \Delta + q(\bx) - k^2 \right] u(\bx; k) = 0, \quad \bx = [x_1, \ldots, x_d] \in \R^d,
\label{eqn:schrodrd}
\end{equation}
in the whole space $\R^d$ for $d=2,3$. The total wavefield is decomposed 
into an incoming wave $u^{\mbox{\scriptsize inc}}(\bx; k)$ and scattered wave 
$u^{\mbox{\scriptsize scat}}(\bx; k)$ so that 
\begin{equation}
u(\bx; k) = u^{\mbox{\scriptsize inc}}(\bx; k) + u^{\mbox{\scriptsize scat}}(\bx; k)
\end{equation}
and the scattered wavefield satisfies a radiation condition at infinity, e.g., Sommerfeld condition
\begin{equation}
\lim_{r \to \infty} r^{\frac{d-1}{2}} \left( \frac{\partial}{\partial r} - \imath k \right) 
u^{\mbox{\scriptsize scat}}(\bx; k) = 0,
\label{eqn:sommer}
\end{equation}
where $r = \sqrt{x_1^2 + \ldots + x_d^2}$. Here $k \in \R_+$ is the wavenumber and the scattering
potential $q(\bx)$ is the quantity of interest. Typically, it is assumed that the target scatterer  
\hl{has compact support } 
in a bounded domain $\Omega \subset \R^d$, i.e., $\overline{\mbox{supp}(q) }\subset \subset \Omega$.
Then, the target scatterer is illuminated by a number of incident wavefields and the corresponding 
far-field scattered wavefields are measured. The classical inverse scattering problem is then to recover 
$q$ 
from these measurements, see for example \cite{newton1989inverse,NewtonIII1980,cheney1984Inverse}. 

For a more realistic measurement setting we modify the problem \eqref{eqn:schrodrd}--\eqref{eqn:sommer}
to work with a finite domain $\Omega \subset \R^d$ instead of the whole $\R^d$. Thus, we pose
the Schrödinger equation in $\Omega$ only:
\begin{equation}
\left[ - \Delta + q(\bx) - k^2 \right] u^{(s)}(\bx; k) = 0, \quad \bx \in \Omega,
\label{eqn:schrodom}
\end{equation}
More importantly, we approximate the radiation condition \eqref{eqn:sommer} with an 
impedance boundary condition
\begin{equation}
\left[ \bn(\bx) \cdot \nabla - \imath k \right] u^{(s)}(\bx; k) = p_s(\bx), \quad \bx \in \partial \Omega, 
\label{eqn:bc}
\end{equation}
where $\bn: \partial \Omega \to \R^d$ is the outward facing normal on $\partial \Omega$ and the dot
denotes the standard inner product of vectors in $\R^d$. Right away we point a few differences between 
the conventional formulation \eqref{eqn:schrodrd}--\eqref{eqn:sommer} and our setting 
\eqref{eqn:schrodom}--\eqref{eqn:bc}. First, the total wavefield is no longer decomposed into an
incoming and scattered components. This leads to the second difference, the non-zero right-hand side of
the boundary condition \eqref{eqn:bc}. Note that the zero right hand side in \eqref{eqn:bc} turns it into
the absorbing boundary condition introduced in \cite{engquist1977absorbing}. 
In our setting the term 
\begin{equation}
p_s \in {H^{1/2}(\partial \Omega; \mathbb{R})}, \quad s = 1,\ldots,m,
\label{eqn:src}
\end{equation}
now plays the role of the source of illumination.
We refer to $p_s$ as the sources, and to facilitate illumination of the target scatterer from 
multiple directions, we employ $m$ of them assuming they satisfy a non-overlapping condition
\begin{equation}
\mbox{supp}(p_{s_1}) \cap \mbox{supp}(p_{s_2}) = \varnothing, \mbox{ for } s_1 \neq s_2.
\label{eqn:srcsupp}
\end{equation}
The wavefield solutions of \eqref{eqn:schrodom}--\eqref{eqn:bc} corresponding to the excitation by 
source $s$ are denoted by $u^{(s)}(\bx; k)$, $s = 1, \ldots, m$. 

Note that we can relate the formulation \eqref{eqn:schrodom}--\eqref{eqn:bc} with boundary sources
to that with incoming waves by choosing
\begin{equation}
p_s(\bx) = \xi^{(s)}(\bx) \left[ \bn(\bx) \cdot \nabla - \imath k \right] 
u^{\mbox{\scriptsize inc}, (s)}(\bx; k),
\quad \bx \in \partial\Omega, \quad s = 1,\ldots,m,
\label{eqn:srcinc}
\end{equation}
where we select $m$ incoming waves $u^{\mbox{\scriptsize inc}, (s)}(\bx; k)$ in such way that 
the right-hand side of \eqref{eqn:srcinc} does not depend on the wavenumber $k$.
The factors $\xi^{(s)}(\bx)$ are the indicator functions of $\mbox{supp}(p_{s})$ chosen to
satisfy \eqref{eqn:srcsupp}. Physically, they represent windows \hl{through} which the incoming wave 
passes before illuminating the domain of interest $\Omega$.

Assuming that the target scatterer satisfies $q \in L^\infty_+(\Omega):=L^\infty(\Omega; (0,\infty))$,
the forward problem \eqref{eqn:schrodom}--\eqref{eqn:bc} admits a weak (variational) formulation 
\begin{equation}
\int_\Omega \overline{\nabla u^{(s)}} \cdot \nabla {v} \; d \bx + 
\int_{\Omega} q \overline{u^{(s)}} v d \bx - 
k^2 \int_\Omega \overline{u^{(s)}} v d \bx + 
\imath k \int_{\partial \Omega} \overline{u^{(s)}} v d\Sigma = 
\int_{\partial \Omega} \overline{p_s} v d\Sigma, 
\label{eqn:weaksom}
\end{equation}
for all $v \in H^1(\Omega)$ and $s = 1,\ldots,m$. The existence and uniqueness of solutions of
\eqref{eqn:weaksom} is guaranteed by the following result.

\begin{theorem}
Given \hl{$k \in \R_+:=(0,\infty)$},  $q\in L^\infty_+(\Omega)$ with \hl{$\overline{\text{supp(q)}}\subset \subset \Omega $}, \hl{and boundary source} $p_s \in {H^{1/2}(\partial \Omega)}$,
the problem \eqref{eqn:weaksom} admits the unique weak solution 
$u^{(s)}(\;\cdot\; ; k) \in H^1(\Omega)$. 
\label{thm:fwdpde}
\end{theorem}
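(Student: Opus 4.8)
The plan is to read \eqref{eqn:weaksom} as a sesquilinear problem on $H^1(\Omega)$ and to obtain well-posedness from a G\aa rding inequality together with the Fredholm alternative, thereby reducing existence to a verification of uniqueness. I would first introduce the sesquilinear form
\[
a(w,v) = \int_\Omega \overline{\nabla w}\cdot\nabla v \, d\bx + \int_\Omega q\,\overline{w}\,v\, d\bx - k^2\int_\Omega \overline{w}\,v\, d\bx + \imath k \int_{\partial\Omega}\overline{w}\,v\, d\Sigma
\]
and the linear functional $F(v) = \int_{\partial\Omega}\overline{p_s}\,v\, d\Sigma$, so that \eqref{eqn:weaksom} becomes $a(u^{(s)},v) = F(v)$ for all $v\in H^1(\Omega)$. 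Assuming $\partial\Omega$ is Lipschitz, the trace operator $H^1(\Omega)\to L^2(\partial\Omega)$ is bounded; combined with $q\in L^\infty(\Omega)$ this makes $a$ a bounded sesquilinear form, and boundedness of $F$ follows from $p_s\in H^{1/2}(\partial\Omega)\hookrightarrow L^2(\partial\Omega)$ paired against the trace of $v$.

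Coercivity of $a$ fails because of the indefinite term $-k^2\int_\Omega\overline{w}v$, so I would pass to the shifted form $b(w,v) = a(w,v) + (k^2+1)\int_\Omega \overline{w}\,v\, d\bx$. Since $q\ge 0$ a.e.\ and the boundary term is purely imaginary, one gets $\mathrm{Re}\,b(u,u) = \|\nabla u\|_{L^2(\Omega)}^2 + \int_\Omega (q+1)|u|^2\, d\bx \ge \|u\|_{H^1(\Omega)}^2$, so $b$ is coercive and Lax--Milgram yields a bounded invertible operator associated to $b$. The difference $a-b$ is $-(k^2+1)(\cdot,\cdot)_{L^2(\Omega)}$, which through the compact (Rellich) embedding $H^1(\Omega)\hookrightarrow L^2(\Omega)$ corresponds to a compact operator. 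Hence the operator attached to $a$ is a compact perturbation of an invertible one, and by the Fredholm alternative existence and uniqueness both follow once injectivity is established; it therefore suffices to show that the only solution of the homogeneous problem is $u\equiv 0$.

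For the uniqueness step, let $u$ satisfy $a(u,v)=0$ for all $v\in H^1(\Omega)$ and test against $v=u$ to obtain
\[
\int_\Omega |\nabla u|^2\, d\bx + \int_\Omega q|u|^2\, d\bx - k^2\int_\Omega|u|^2\, d\bx + \imath k\int_{\partial\Omega}|u|^2\, d\Sigma = 0.
\]
Taking imaginary parts and using $k>0$ forces $\int_{\partial\Omega}|u|^2\, d\Sigma = 0$, i.e.\ $u|_{\partial\Omega}=0$. I would then extend $u$ by zero to $\widetilde u$ on $\R^d$: the vanishing trace guarantees $\widetilde u\in H^1(\R^d)$, and because the boundary term drops out of the weak form, $\widetilde u$ satisfies $\left[-\Delta + q - k^2\right]\widetilde u = 0$ weakly in all of $\R^d$, with $q$ extended by zero consistently since $\overline{\mathrm{supp}(q)}\subset\subset\Omega$. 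As $\widetilde u$ vanishes on the nonempty open exterior $\R^d\setminus\overline{\Omega}$, unique continuation propagates this into the connected interior and gives $\widetilde u\equiv 0$, hence $u\equiv 0$.

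The G\aa rding-plus-Fredholm machinery and the boundary-term computation are routine; the genuine obstacle is the last step. Near $\partial\Omega$ the potential vanishes and $u$ solves the Helmholtz equation, where Holmgren's theorem applies to analytic-coefficient operators, but propagating the vanishing across $\mathrm{supp}(q)$, where $q$ is merely $L^\infty$ and non-analytic, requires the (weak) unique continuation property for Schr\"odinger operators with bounded potentials, established via Carleman estimates in the spirit of Jerison--Kenig. This analytical input, together with the standing regularity of $\partial\Omega$ needed for the trace and Rellich theorems, is where the essential work of the proof is concentrated.
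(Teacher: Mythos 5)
Your proposal is correct and rests on the same skeleton as the paper's proof in Appendix C: recast the weak form as a bounded sesquilinear problem on $H^1(\Omega)$, realize the associated operator as a compact perturbation of an invertible one, and conclude by the Fredholm alternative. The differences are in the choice of splitting and in how much of the argument is actually carried out. The paper groups the gradient term with the impedance boundary term into a form $a_1$ whose coercivity rests on a Friedrichs-type inequality (that $\|\nabla u\|_{L^2(\Omega)}^2+\|u\|_{L^2(\partial\Omega)}^2$ controls $\|u\|_{H^1(\Omega)}^2$ on a Lipschitz domain), and pushes both the $-k^2$ term and the $q$-term into the compact part; you instead shift by $(k^2+1)$ times the $L^2$ inner product so that the real part of the shifted form is coercive outright, keep $q\ge 0$ on the coercive side, and need Rellich compactness only for the pure $L^2$ remainder. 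Either split works and the machinery is interchangeable. The more substantive difference is that the paper's sketch merely asserts injectivity of $\mathcal{I}+k^2\mathcal{A}$ (deferring to the cited Helmholtz references), whereas you supply the argument: taking imaginary parts of $a(u,u)=0$ forces the trace to vanish, the weak form then shows both Cauchy data vanish so the zero extension is a weak solution on all of $\R^d$, and weak unique continuation for Schr\"odinger operators with $L^\infty$ potentials (Carleman estimates in the Jerison--Kenig spirit) yields $u\equiv 0$. You correctly identify that this unique continuation step, not the G\aa rding-plus-Fredholm bookkeeping, is the essential analytical input; it is precisely the part the paper leaves implicit.
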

The proof is similar to that for the Helmholtz case \cite{Wald2018,tataris2023reduced}. 
\hl{For a sketch of the proof we refer to Appendix} \ref{app:fwdpde}.
Later, we will assume that the source functions admit real values.
\subsection{Operator form and Fr\'{e}chet differentiation}

In this section we introduce the operator formulation of the forward model. Its use is twofold.
First, the ROM framework developed in this work relies on finite-dimensional projections of the 
operators entering this formulation, as explained in Section~\ref{sec:projrom}. Second, the operator 
formulation helps in deriving the Fr\'{e}chet derivative of the solution wavefield with respect 
to the wave number $k$. These derivatives are required for computing the ROM from the measurements.

We begin by defining the three operators
\begin{equation}
\cS, \cM, \cB : H^1(\Omega) \to {H^1(\Omega)}'.
\label{eqn:op3}
\end{equation}
Let $\langle \cdot, \cdot \rangle_{H^1(\Omega)^\prime}$ be the pairing between $H^1(\Omega)$ and the 
corresponding space of distributions ${H^1(\Omega)}'$. Then, for all $f, g \in H^1(\Omega)$ 
we define $\cS, \cM$ and $\cB$ by
\begin{align}
\left\langle \cS f, g \right\rangle_{H^1(\Omega)^\prime}
& = \int_\Omega \overline{\nabla f} \cdot \nabla g d \bx + \int_{\Omega} q \overline{f} g d \bx, 
\label{eqn:ops}\\
\left\langle \cM f, g \right\rangle_{H^1(\Omega)^\prime} 
& = \int_{\Omega} \overline{f} g d \bx, 
\label{eqn:opm}\\
\left\langle \cB f, g \right\rangle_{H^1(\Omega)^\prime} 
& = \int_{\partial \Omega} \overline{f} g d\Sigma.
\label{eqn:opb}
\end{align}
Next, we introduce
\begin{equation}
F^{(s)} : (k, u) \in \R_+ \times H^1(\Omega) \to {H^1(\Omega)^\prime}, \quad s=1,\ldots,m,
\end{equation}
as
\begin{equation}
F^{(s)}(k, u) = (\cS - k^2 \cM + \imath k \cB) u - P^{(s)}, 
\quad (k, u) \in \R_+ \times H^1(\Omega), 
\quad s=1,\ldots,m,
\label{eqn:opf}
\end{equation}
where $P^{(s)} \in H^1(\Omega)^\prime$ is defined by
$$
\left\langle P^{(s)}, g \right\rangle_{H^1(\Omega)^\prime} 
=\int_{\partial \Omega}\overline{p_s} g d\Sigma = 
\int_{\partial \Omega}p_s g d\Sigma
$$
for any $g \in H^1(\Omega)$, assuming that $p_s \in H^{1/2}(\partial \Omega;\R), \ s=1,...,m. $

\begin{remark}
The weak problem \eqref{eqn:weaksom} can be expressed in the operator form using 
\eqref{eqn:ops}--\eqref{eqn:opf} as a problem of finding $u^{(s)}(\cdot ; k) \in H^1(\Omega)$ satisfying
\begin{equation}
(\cS - k^2 \cM + \imath k \cB) u^{(s)}(\cdot ; k) = P^{(s)},
\quad s = 1,\ldots,m. 
\label{eqn:opform}
\end{equation}
\end{remark}

Operator formulation \eqref{eqn:opform} allows to differentiate the solution $u^{(s)}(\bx;k)$ 
in Fr\'{e}chet sense with respect to the wavenumber $k$ by applying the implicit function theorem 
to $F^{(s)}$. The following result establishes the boundary value problem satisfied by Fr\'{e}chet derivative.  

\begin{theorem}
\label{thm:uderiv}
Let $q\in L^\infty_+(\Omega)$. The wavefields $u^{(s)}(\bx ; k)$ satisfying 
\eqref{eqn:schrodom}--\eqref{eqn:bc} are differentiable with respect to $k$ and their Fr\'{e}chet 
derivatives at $k=k_0$ denoted by $\partial_k u^{(s)}(\bx; k_0) = w^{(s)}(\bx)$, are weak solutions of the following problem
\begin{equation}
\left[ -\Delta + q(\bx) - k_0^2 \right] w^{(s)}(\bx) = 2 k_0 u^{(s)}(\bx; k_0), 
\quad \bx \in \Omega, \quad s = 1,\ldots,m,
\label{eqn:schrodderiv}
\end{equation}
with boundary condition
\begin{equation}
\left[ \bn(\bx) \cdot \nabla - \imath k_0 \right] w^{(s)}(\bx) = \imath u^{(s)}(\bx; k_0), 
\quad \bx \in \partial \Omega, \quad s = 1,\ldots,m.
\label{eqn:bcderiv}
\end{equation}
\end{theorem}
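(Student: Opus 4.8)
The plan is to apply the implicit function theorem (IFT) in Banach spaces to the map $F^{(s)}$ of \eqref{eqn:opf}, viewing the wavefield as an implicitly defined branch $k \mapsto u^{(s)}(\cdot\,; k)$ through the equation $F^{(s)}\big(k, u^{(s)}(\cdot\,; k)\big) = 0$, which is precisely the operator form \eqref{eqn:opform}. First I would record the regularity needed to even invoke the IFT: writing $\mathcal{A}(k) := \cS - k^2 \cM + \imath k \cB$, the map $F^{(s)}(k,u) = \mathcal{A}(k) u - P^{(s)}$ is affine in $u$ and a quadratic polynomial in the real parameter $k$, hence jointly smooth (in particular $C^1$) as a map $\R_+ \times H^1(\Omega) \to H^1(\Omega)^\prime$. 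The source term $P^{(s)}$ is independent of $k$ and so disappears upon differentiation in $k$.

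Next I would compute the two partial derivatives, both immediate from the explicit form of $\mathcal{A}(k)$: by linearity in $u$ the partial in $u$ is $\partial_u F^{(s)}(k_0,u_0) = \mathcal{A}(k_0)$ itself, while the partial in $k$ is $\partial_k F^{(s)}(k_0,u_0) = (-2k_0 \cM + \imath \cB)u_0$. The single hypothesis of the IFT that is not automatic is that $\partial_u F^{(s)}(k_0,u_0) = \mathcal{A}(k_0)$ be a bounded linear isomorphism of $H^1(\Omega)$ onto $H^1(\Omega)^\prime$; boundedness is clear from the continuity of the forms \eqref{eqn:ops}--\eqref{eqn:opb}, so the real content — and the step I expect to be the main obstacle — is invertibility. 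I would derive it from the well-posedness already established in Theorem~\ref{thm:fwdpde}: its proof, via the Fredholm alternative for the sesquilinear form attached to $\mathcal{A}(k_0)$, shows not merely that \eqref{eqn:opform} is solvable for the particular data $P^{(s)}$ but that $\mathcal{A}(k_0)$ is bijective for \emph{every} right-hand side. Since Theorem~\ref{thm:fwdpde} is phrased for a fixed source, this is the point to state carefully.

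With the hypotheses verified, the IFT produces a $C^1$ branch $k \mapsto u^{(s)}(\cdot\,;k)$ in a neighborhood of $k_0$ together with the derivative formula
\[
w^{(s)} = \partial_k u^{(s)}(\cdot\,; k_0) = -\,\mathcal{A}(k_0)^{-1}\,\partial_k F^{(s)} = \mathcal{A}(k_0)^{-1}\big(2 k_0 \cM - \imath \cB\big) u^{(s)}(\cdot\,; k_0),
\]
equivalently the operator identity $(\cS - k_0^2 \cM + \imath k_0 \cB)\,w^{(s)} = 2 k_0 \cM u^{(s)} - \imath \cB u^{(s)}$. (A direct difference-quotient argument using $\partial_k \mathcal{A}^{-1} = -\mathcal{A}^{-1}(\partial_k \mathcal{A})\mathcal{A}^{-1}$ would give the same identity and could replace the IFT if preferred.)

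Finally I would translate this identity back to the strong form by pairing against an arbitrary $v \in H^1(\Omega)$, reversing the derivation that led to \eqref{eqn:weaksom}. Restricting to $v \in C_c^\infty(\Omega)$ isolates the volume terms, where $\cS - k_0^2\cM$ reproduces $(-\Delta + q - k_0^2) w^{(s)}$ via \eqref{eqn:ops}--\eqref{eqn:opm} and $2k_0 \cM u^{(s)}$ gives $2k_0 u^{(s)}$, yielding \eqref{eqn:schrodderiv}. Allowing general $v$ and integrating by parts then collects the boundary contributions: the term $\imath k_0 \cB w^{(s)}$ supplies the Robin part and $-\imath \cB u^{(s)}$ the boundary data, giving $(\bn \cdot \nabla - \imath k_0) w^{(s)} = \imath u^{(s)}$, i.e.\ \eqref{eqn:bcderiv}. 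The only delicate point in this last step is tracking the complex-conjugate signs consistently — the same bookkeeping that turns the classical condition $(\bn \cdot \nabla - \imath k)u = p_s$ of \eqref{eqn:bc} into the $+\imath k$ boundary term appearing in \eqref{eqn:weaksom}.
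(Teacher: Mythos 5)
Your proposal is correct and follows essentially the same route as the paper's Appendix B: apply the implicit function theorem to $F^{(s)}$ from \eqref{eqn:opf}, identify $\partial_u F^{(s)}=\cS-k_0^2\cM+\imath k_0\cB$ and $\partial_k F^{(s)}=(-2k_0\cM+\imath\cB)u^{(s)}(k_0)$, invoke the invertibility supplied by the well-posedness argument behind Theorem~\ref{thm:fwdpde}, and read off the boundary value problem \eqref{eqn:schrodderiv}--\eqref{eqn:bcderiv}. Your extra care about two points the paper glosses over --- that Theorem~\ref{thm:fwdpde}'s Fredholm argument actually yields bijectivity of $\cS-k_0^2\cM+\imath k_0\cB$ for arbitrary right-hand sides, and the conjugation bookkeeping that makes the right-hand side $(2k_0\cM-\imath\cB)u^{(s)}(k_0)$ (the sign consistent with \eqref{eqn:bcderiv}) --- is welcome and correct.
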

The proof based on the implicit function theorem is given in Appendix~\ref{app:diff}.

\subsection{Measurements and the inverse scattering problem}
\label{sec:measisp}

In practice, performing measurements of the solutions of \eqref{eqn:weaksom} at all
wavenumbers $k \in \R_+$ is infeasible. Thus, the first assumption we make about the measurement
setup is that we only have access to information about the wavefields at a finite number $n$ of 
distinct sampling wavenumbers
\begin{equation}
0 < k_1 < k_2 < \ldots < k_n.
\end{equation}
We use these wavenumbers to define the \emph{snapshots} of wavefields and their derivatives
\begin{equation}
u_j^{(s)}(\bx) = u^{(s)}(\bx; k_j), \quad
\partial_k u_j^{(s)}(\bx) = \partial_k u^{(s)}(\bx; k_j), 
\quad j = 1,\ldots,n, \quad s = 1,\ldots,m,
\label{eqn:usnap}
\end{equation}
Note that in inverse scattering one does not have access to the snapshots, but only to their 
``far field'' measurements, which in our setting correspond to boundary traces that we denote by
\begin{equation}
\phi_j^{(s)} = \left. u_j^{(s)} \right|_{\partial \Omega}, \quad
\partial_k  \phi_j^{(s)} = \left. \partial_k u_j^{(s)} \right|_{\partial \Omega}, \quad
j = 1,\ldots,n, \quad s = 1,\ldots,m.
\label{eqn:thetasnap}
\end{equation}
Thus, we assume that the data has the form 
\begin{equation}
\cD = \left\{ \phi_j^{(s)} , \partial_k \phi_j^{(s)}  \right\}_{j=1,\ldots,n; s=1,\ldots,m}.
\label{eqn:data}
\end{equation}
Practically, the trace derivative data $\partial_k \phi_j^{(s)}$ can be obtained by measuring boundary 
traces $\left. u^{(s)}(\bx; k) \right|_{\partial \Omega}$ at one or more wavenumbers close to $k_j$
followed by interpolation and numerical differentiation. We can now formulate the inverse scattering 
problem that we address here.

\textbf{Inverse scattering problem (ISP)}. Given the data \eqref{eqn:data} estimate the scattering
potential $q\in L^\infty_+(\Omega)$.

\begin{remark}
We use the term ``estimate'' in the formulation of the ISP above since in general it may be impossible
to recover exactly the scattering potential $q$ from the finite amount of data in \eqref{eqn:data}.
\end{remark}

\begin{remark}
As we shall see in what follows, the pointwise knowledge of the traces \eqref{eqn:data} on 
$\partial \Omega$ is not needed for the construction of the ROMs of the kind that we intend to use. 
Instead, the knowledge of certain integrals involving the traces in $\cD$ is sufficient. 
However, to keep the formulation of the ISP concise and clear, we assume that the whole of 
$\cD$ can be measured.
\end{remark}

\subsection{Projection reduced order model}
\label{sec:projrom}

At the heart of the approach proposed here for the numerical solution of the ISP formulated above
are the techniques of projection-based model order reduction. The first step in all such approaches 
\cite{mamonov2022velocity,borcea2014model,druskin2021lippmann} is to project the operators 
\eqref{eqn:op3} onto the reduced order space
\begin{equation}
\mathcal{X}
 = \text{span} \left\{u_j^{(s)} \right\}_{ j = 1,...,n; \ s = 1,...,m }
\label{eqn:romspace}
\end{equation}
spanned by the wavefield snapshots \eqref{eqn:usnap}. If the snapshots for all sampling wavenumbers
and all sources are linearly independent, then
\begin{equation}
\dim(\mathcal{X}) = mn.
\label{eqn:dimx}
\end{equation}

While eventually we are interested in orthogonal projections of \eqref{eqn:op3} onto $\mathcal{X}$, 
we postpone this discussion until Section~\ref{sec:lanczos}. For now, we consider the stiffness 
and mass matrices along with another boundary matrix that \hl{arise} if we view the problem 
\eqref{eqn:opform} in Galerkin framework. Specifically, we introduce the matrices
\begin{equation}
\bS, \bM, \bB \in \C^{mn \times mn},
\label{eqn:mat3}
\end{equation}
referred to as the stiffness, mass and boundary matrices. Hereafter we denote $mn \times mn$
matrices by bold uppercase letters. Due to the indexing of snapshots according to sampling
wavenumbers and source numbers, all three matrices \eqref{eqn:mat3} have a block structure
consisting of $n \times n$ blocks of size $m \times m$ each, e.g.,
\begin{equation}
\bS = \begin{bmatrix}
\bs_{11} & \bs_{12} & \ldots & \bs_{1n} \\ 
\bs_{21} & \bs_{22} & \ldots & \bs_{2n} \\
\vdots & \vdots & \ddots & \vdots \\
\bs_{n1} & \bs_{n2} & \ldots & \bs_{nn} \\
\end{bmatrix} \in \C^{mn \times mn},
\label{eqn:mats}
\end{equation}
with 
\begin{equation}
\bs_{ij} \in \C^{m \times m}, \quad i,j = 1,\ldots,n.
\end{equation}
Likewise,
\begin{align}
\bM & = \left[ \bm_{ij} \right]_{i,j=1,\ldots,n}, \quad \bm_{ij} \in \C^{m \times m}, 
\quad i,j = 1,\ldots,n, \label{eqn:matm} \\
\bB & = \left[ \bb_{ij} \right]_{i,j=1,\ldots,n}, \quad \bb_{ij} \in \C^{m \times m}, 
\quad i,j = 1,\ldots,n. \label{eqn:matb}
\end{align}
We refer to the sub-matrices $\bs_{ij}$, $\bm_{ij}$ and $\bb_{ij}$ in \eqref{eqn:mats}--\eqref{eqn:matb}
as the blocks and hereafter denote them by bold lowercase letters.

Following the standard Galerkin framework, we use the formulas \eqref{eqn:ops}--\eqref{eqn:opb} 
that define the operators \eqref{eqn:op3} to express the blocks of the three matrices \eqref{eqn:mat3} 
in terms of their individual entries
\begin{align}
[\bs_{ij}]_{rs} & = \left\langle \cS u_i^{(r)}, u_j^{(s)} \right\rangle_{H^1(\Omega)^\prime}  = 
 \int_\Omega \overline{\nabla u^{(r)}_i} \cdot \nabla u^{(s)}_j d\bx 
+ \int_\Omega q \overline{ u^{(r)}_i} u^{(s)}_j d\bx, 
\label{eqn:sij} \\
[\bm_{ij}]_{rs} & = \left\langle \cM u_i^{(r)}, u_j^{(s)} \right\rangle_{H^1(\Omega)^\prime}  = 
\int_\Omega \overline{u^{(r)}_i} u^{(s)}_j d\bx, 
\label{eqn:mij} \\
[\bb_{ij}]_{rs} & = \left\langle \cB u_i^{(r)}, u_j^{(s)} \right\rangle_{H^1(\Omega)^\prime}  = 
\int_{\partial \Omega} \overline{u^{(r)}_i} u^{(s)}_j d\Sigma,
\label{eqn:bij}
\end{align}
where the block indices \hl{run over wavenumber indices} $i,j = 1,\ldots,n$, 
while the entries within each block are indexed by the ``source/receiver'' indices $r,s = 1,\ldots,m$. 
Clearly, the formulas \eqref{eqn:sij}--\eqref{eqn:bij} imply that the blocks of the stiffness, mass and
boundary matrices for all $i,j = 1,\ldots,n$, and $r,s = 1,\ldots,m$, satisfy
\begin{align}
[\bs_{ij}]_{rs} & = \overline{[\bs_{ji}]}_{sr},  \\
[\bm_{ij}]_{rs} & = \overline{[\bm_{ji}]}_{sr},  \\
[\bb_{ij}]_{rs} & = \overline{[\bb_{ji}]}_{sr}, 
\label{eqn:hermitianMatrices}
\end{align}
or simply
\begin{equation}
\bs_{ij} = \bs^*_{ji}, \quad \bm_{ij} = \bm^*_{ji}, \quad \bb_{ij} = \bb^*_{ji}, \quad
i,j = 1,\ldots,n,
\label{eqn:mat3hermitblock}
\end{equation}
which means that the three corresponding matrices are Hermitian
\begin{equation}
\bS = \bS^*, \quad \bM = \bM^*, \quad \bB = \bB^*.
\label{eqn:mat3hermit}
\end{equation}
In addition, \eqref{eqn:mat3hermit} in conjunction with \eqref{eqn:mij} and \eqref{eqn:dimx} also 
implies that $\bM$ is positive-definite.

Note that in order to compute the blocks of $\bS$ and $\bM$ using relations 
\eqref{eqn:sij}--\eqref{eqn:mij} one requires the knowledge of the wavefield snapshots in the whole 
$\Omega$, while we are interested in solving the ISP where we only have access to the traces 
\eqref{eqn:data}. Incidentally, the mass and stiffness matrices possess a remarkable property that 
allows their blocks to be computed from the data $\cD$ only. Since the matrices $\bS$, $\bM$ and $\bB$ 
can be thought of as the ROM of the operators $\cS$, $\cM$ and $\cB$, respectively, we will therefore 
refer to such ROM as data-driven. Computing the ROM in the data-driven manner is discussed in detail 
in the next section.

\section{Main results and the method}
\label{sec:mainres}

In this section we introduce the method for the numerical solution of the ISP based on data-driven ROM. 
First, we study the computation of the ROM matrices \eqref{eqn:mat3} from the data \eqref{eqn:data}.
Next, we consider orthogonal projection ROM obtained \hl{by means of the block Lanczos} algorithm. Once the 
ROM is computed, we formulate the optimization problem for solving the ISP via regularized ROM misfit 
minimization.

\subsection{Data-driven ROM}

As mentioned in Section~\ref{sec:projrom} even though the ROM matrices \eqref{eqn:mat3} are 
defined in terms of the wavefield snapshots in the bulk of $\Omega$, it is possible to compute them 
in a data-driven way making them useful for solving the ISP numerically. Assuming we have access 
to snapshot traces in $\cD$, we can compute the following integral quantities. First, we need the blocks
\begin{equation}
\bd_j, \partial_k \bd_j \in \C^{m \times m}, \quad j=1,\ldots,n,
\end{equation}
defined entrywise as
\begin{align}
[\bd_j]_{rs} & 
= \int_{\partial \Omega} \overline{p_r} u_j^{(s)} d \Sigma
= \int_{\partial \Omega} p_r u_j^{(s)} d \Sigma, 
\label{eqn:dblocks} \\
[\partial_k \bd_j]_{rs} & 
= \int_{\partial \Omega} \overline{p_r} \partial_k u_j^{(s)} d \Sigma
= \int_{\partial \Omega} p_r \partial_k u_j^{(s)} d \Sigma,
\label{eqn:dkdblocks}
\end{align}
with $r,s = 1,\ldots,m$ and $j=1,\ldots,n$, 
where we used assumption \eqref{eqn:src} that the sources are real valued. 
This allows us to obtain the following reciprocity result. 
\begin{proposition}
The blocks \eqref{eqn:dblocks} are complex-symmetric
\begin{equation}
\bd_j^T = \bd_j, \quad j = 1,\ldots,n.
\label{eqn:dblocksym}
\end{equation}
\label{prop: complex symmetric data}
\end{proposition}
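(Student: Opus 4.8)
The plan is to obtain the complex symmetry of $\bd_j$ from a reciprocity identity among the snapshots at a single wavenumber $k_j$. Since $[\bd_j]_{rs} = \int_{\partial\Omega} p_r u_j^{(s)} d\Sigma$ pairs the source $p_r$ with the field generated by $p_s$, the claim $\bd_j^T = \bd_j$ is exactly the statement
\begin{equation}
\int_{\partial\Omega} p_r \, u_j^{(s)} \, d\Sigma = \int_{\partial\Omega} p_s \, u_j^{(r)} \, d\Sigma,
\quad r,s = 1,\ldots,m.
\label{eqn:recipgoal}
\end{equation}
Morally this is the classical Green's-second-identity reciprocity: $u_j^{(r)}$ and $u_j^{(s)}$ solve \eqref{eqn:schrodom} with the \emph{same} operator $-\Delta + q - k_j^2$, so $\int_\Omega (u_j^{(r)} \Delta u_j^{(s)} - u_j^{(s)} \Delta u_j^{(r)}) d\bx = 0$, and inserting the impedance condition \eqref{eqn:bc} makes the $\imath k_j$ boundary terms cancel, leaving \eqref{eqn:recipgoal}. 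To keep everything at $H^1$ regularity, however, I would carry this out in the variational framework rather than through $\Delta u$.

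The first step is to rewrite the weak form \eqref{eqn:weaksom} in an \emph{unconjugated, complex-bilinear} fashion. Taking the complex conjugate of \eqref{eqn:weaksom} for source $s$ at $k = k_j$, and using that $q$ and the sources $p_s$ are real-valued, one sees that $v \mapsto \overline{v}$ ranges over all of $H^1(\Omega)$, so relabeling the test function gives: for every $\psi \in H^1(\Omega)$,
\begin{equation}
B_j\!\left(u_j^{(s)}, \psi\right) := \int_\Omega \nabla u_j^{(s)} \cdot \nabla \psi \, d\bx
+ \int_\Omega (q - k_j^2)\, u_j^{(s)} \psi \, d\bx
- \imath k_j \int_{\partial\Omega} u_j^{(s)} \psi \, d\Sigma
= \int_{\partial\Omega} p_s \psi \, d\Sigma .
\label{eqn:bilinweak}
\end{equation}
The form $B_j$ is genuinely bilinear and manifestly symmetric, $B_j(u,\psi) = B_j(\psi,u)$, since each of its three terms is unchanged under interchange of its arguments.

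With \eqref{eqn:bilinweak} in hand the result is immediate. Testing the equation for source $s$ against $\psi = u_j^{(r)}$ and the equation for source $r$ against $\psi = u_j^{(s)}$ yields
\begin{equation}
[\bd_j]_{sr} = \int_{\partial\Omega} p_s\, u_j^{(r)}\, d\Sigma = B_j\!\left(u_j^{(s)}, u_j^{(r)}\right),
\qquad
[\bd_j]_{rs} = \int_{\partial\Omega} p_r\, u_j^{(s)}\, d\Sigma = B_j\!\left(u_j^{(r)}, u_j^{(s)}\right),
\end{equation}
and the symmetry of $B_j$ forces $[\bd_j]_{rs} = [\bd_j]_{sr}$, i.e.\ \eqref{eqn:recipgoal}, which is \eqref{eqn:dblocksym}. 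I expect the only real subtlety to be the first step: the weak form \eqref{eqn:weaksom} is \emph{sesquilinear} (conjugate-linear in the solution), so testing it directly produces Hermitian-type relations rather than the desired complex-symmetric reciprocity. The crux is therefore the passage to the unconjugated symmetric form $B_j$, which uses essentially that $q$ and each $p_s$ are real-valued. A secondary point worth flagging is that both snapshots must share the wavenumber $k_j$; this is exactly why the argument is performed block-by-block for each fixed $j$, as the $k_j^2$ term in $B_j$ must be common to both fields for the cancellation to occur.
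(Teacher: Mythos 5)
Your proof is correct and is essentially the paper's own argument: the paper likewise conjugates the weak form \eqref{eqn:weaksom}, tests the $s$-equation against $\overline{u_j^{(r)}}$ and the $r$-equation against $\overline{u_j^{(s)}}$, and equates the (manifestly symmetric) left-hand sides — which is precisely your symmetry of the unconjugated bilinear form $B_j$, just written out explicitly rather than packaged as a named form.
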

\begin{proof}
For a fixed $j \in \{ 1,2,\ldots,n \}$ we begin by taking the complex conjugate of \eqref{eqn:weaksom}:
\begin{equation}
\int_\Omega \nabla u^{(s)}_j \cdot \overline{\nabla {v}} \; d \bx + 
\int_{\Omega} q u^{(s)}_j \overline{v} d \bx - 
k_j^2 \int_\Omega u^{(s)}_j \overline{v} d \bx - 
\imath k_j \int_{\partial \Omega} u^{(s)}_j \overline{v} d\Sigma = 
\int_{\partial \Omega} {p_s} \overline{v} d\Sigma, 
\label{eqn:weaksoms}
\end{equation}
then, choosing $v = \overline{u^{(r)}_j}$, we obtain
\begin{equation}
\int_\Omega \nabla u^{(s)}_j \cdot \nabla u^{(r)}_j \; d \bx + 
\int_{\Omega} q u^{(s)}_j u^{(r)}_j d \bx - 
k_j^2 \int_\Omega u^{(s)}_j u^{(r)}_j d \bx - 
\imath k_j \int_{\partial \Omega} u^{(s)}_j u^{(r)}_j d\Sigma = 
\int_{\partial \Omega} {p_s} u^{(r)}_j d\Sigma.
\label{eqn:weaksomsr}
\end{equation}
On the other hand, replacing $s$ with $r$ in \eqref{eqn:weaksoms} and taking the test function
to be $v = \overline{u^{(s)}_j}$, we arrive at
\begin{equation}
\int_\Omega \nabla u^{(r)}_j \cdot \nabla u^{(s)}_j \; d \bx + 
\int_{\Omega} q u^{(r)}_j u^{(s)}_j d \bx - 
k_j^2 \int_\Omega u^{(r)}_j u^{(s)}_j d \bx - 
\imath k_j \int_{\partial \Omega} u^{(r)}_j u^{(s)}_j d\Sigma = 
\int_{\partial \Omega} {p_r} u^{(s)}_j d\Sigma.
\label{eqn:weaksomrs}
\end{equation}
Since the left-hand sides of \eqref{eqn:weaksomsr} and \eqref{eqn:weaksomrs} are equal, 
the same holds for the \hl{right-hand sides}
\begin{equation}
[\bd_j]_{sr} = 
\int_{\partial \Omega} {p_s} u^{(r)}_j d\Sigma = 
\int_{\partial \Omega} {p_r} u^{(s)}_j d\Sigma =
[\bd_j]_{rs}, \quad s,r = 1,\ldots,m,
\label{eqn:dj}
\end{equation}
which immediately implies \eqref{eqn:dblocksym}.
\end{proof}
Note that since 
\begin{equation}
[\partial_k \bd_j]_{rs} 
= \int_{\partial \Omega} p_r \partial_k u_j^{(s)} d \Sigma
= \left. \partial_k \left[ \int_{\partial \Omega} p_r(\bx) u^{(s)}(\bx; k) d \Sigma(\bx) \right] \right|_{k = k_j},
\end{equation}
it follows from \eqref{eqn:dblocks} and \eqref{eqn:dblocksym} that the derivative blocks \eqref{eqn:dkdblocks}
are complex-symmetric as well
\begin{equation}
[\partial_k \bd_j]^T = \partial_k \bd_j, \quad j = 1,\ldots,n.
\label{eqn:dkdblocksym}
\end{equation}

We also need the boundary matrix and blocks
\begin{equation}
\bB \in \C^{mn \times mn}, \quad \bc_j \in \C^{m \times m}, \quad j=1,\ldots,n,
\end{equation}
defined as
\begin{align}
[\bb_{ij}]_{rs} & 
= \int_{\partial \Omega} \overline{u^{(r)}_i} u^{(s)}_j d\Sigma, 
\label{eqn:bblocks} \\
[\bc_{j}]_{rs} &
=  \int_{\partial \Omega} \left[- \overline{u^{(r)}_j } \partial_k u^{(s)}_j
+ u^{(s)}_j \overline{\partial_k u^{(r)}_j} \right] d\Sigma,
\label{eqn:cblocks}
\end{align}
with $i,j = 1,\ldots,n$, and $r,s = 1,\ldots,m$. As established already in Section~\ref{sec:projrom},
the blocks \eqref{eqn:bblocks} are Hermitian and so is $\bB$. It follows directly from the definition
\eqref{eqn:cblocks} that the boundary blocks $\bc_j$ are skew-Hermitian
\begin{equation}
\bc_{j}^* = -\bc_{j}, \quad j=1,\ldots,n.
\label{eqn:cblockasym}
\end{equation}

As explained later, the stiffness and mass matrices, $\bS$ and $\bM$, respectively, are completely 
determined by the boundary integral quantities \eqref{eqn:dblocks}--\eqref{eqn:dkdblocks} and 
\eqref{eqn:bblocks}--\eqref{eqn:cblocks}. Thus, the knowledge of the whole data set $\cD$ 
is not needed for our approach to solving the ISP. Instead, the knowledge of 
\begin{equation}
\{ \bB, \bc_j, \bd_j, \partial_k \bd_j \}_{j=1,\ldots,n}
\label{eqn:bdryint}
\end{equation}
is sufficient. However, for the sake of simplicity we stick to the ISP formulation from 
Section~\ref{sec:measisp} since the data $\cD$ can be easily converted to the knowledge of
\eqref{eqn:bdryint} using \eqref{eqn:dblocks}--\eqref{eqn:dkdblocks} and 
\eqref{eqn:bblocks}--\eqref{eqn:cblocks}.

\subsubsection{Data-driven stiffness and mass matrix computaion}

We provide here the formulas for computing the blocks \eqref{eqn:sij}--\eqref{eqn:mij} 
of the stiffness and mass matrices from \eqref{eqn:bdryint} and thus from the data \eqref{eqn:data}.
Note that the formulas for the off-diagonal and diagonal blocks are different, hence we formulate
them as separate propositions below. The proofs of all the propositions that follow can be found in 
Appendix~\ref{app:proofs}.

\begin{proposition}
\label{prop:sij}
The off-diagonal blocks $\bs_{ij} \in \C^{m \times m}$, $i,j = 1,\ldots,n$, $i \neq j$, 
of the stiffness matrix \eqref{eqn:sij} are given by
\begin{equation}
\bs_{ij} = 
\frac{k^2_i \bd_{i}^* - k^2_j {\bd_{j}}}{k^2_i - k^2_j}
- \imath \frac{  (k_i k^2_j + k^2_i k_j ) \bb_{ij}}{k^2_i - k^2_j}.
\label{eqn:ddsij}
\end{equation}
\end{proposition}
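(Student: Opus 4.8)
The plan is to derive two independent linear relations among the blocks $\bs_{ij}$, $\bm_{ij}$ and $\bb_{ij}$ and then eliminate the mass block $\bm_{ij}$. The only tool needed is the weak formulation \eqref{eqn:weaksom}, tested against the snapshots themselves. First I would write \eqref{eqn:weaksom} for source $r$ at the sampling wavenumber $k_i$ (so that the solution is $u_i^{(r)}$) and choose the test function $v = u_j^{(s)}$. Reading off the four integrals through the block definitions \eqref{eqn:sij}, \eqref{eqn:mij}, \eqref{eqn:bij} and the data definition \eqref{eqn:dblocks}, every entry collapses into the single block identity
\begin{equation}
\bs_{ij} - k_i^2 \bm_{ij} + \imath k_i \bb_{ij} = \bd_j.
\label{eqn:sketchone}
\end{equation}
The essential feature here is that testing against a snapshot turns the source integral on the right-hand side into the measurable boundary block $\bd_j$, while $\bb_{ij}$ is itself among the known boundary quantities \eqref{eqn:bdryint}; thus \eqref{eqn:sketchone} is one equation in the two unknown blocks $\bs_{ij}$ and $\bm_{ij}$.

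Since \eqref{eqn:sketchone} alone is underdetermined, the crux of the argument is to produce a second, independent relation. For this I would repeat the computation with the roles of $i$ and $j$ exchanged, i.e.\ test the weak form for source $r$ at $k_j$ against $v = u_i^{(s)}$, obtaining $\bs_{ji} - k_j^2 \bm_{ji} + \imath k_j \bb_{ji} = \bd_i$. Taking the conjugate transpose of this matrix identity and invoking the Hermitian block symmetries \eqref{eqn:mat3hermitblock} (so that $\bs_{ji}^*=\bs_{ij}$, $\bm_{ji}^*=\bm_{ij}$, $\bb_{ji}^*=\bb_{ij}$, while the real coefficients are unchanged and $\imath k_j$ flips sign) recasts it as a relation in the very same unknowns,
\begin{equation}
\bs_{ij} - k_j^2 \bm_{ij} - \imath k_j \bb_{ij} = \bd_i^*.
\label{eqn:sketchtwo}
\end{equation}
Alternatively, \eqref{eqn:sketchtwo} can be reached without symmetry by conjugating the weak form for $u_j$, testing against $u_i^{(r)}$, and using the complex symmetry \eqref{eqn:dblocksym} of $\bd_i$; either route makes clear that it is precisely the sign flip of the $\imath k$ boundary term that renders \eqref{eqn:sketchone} and \eqref{eqn:sketchtwo} linearly independent.

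Finally I would regard \eqref{eqn:sketchone}--\eqref{eqn:sketchtwo} as a $2 \times 2$ system for $\bs_{ij}$ and $\bm_{ij}$, with $\bb_{ij}$, $\bd_i$ and $\bd_j$ all known from the data \eqref{eqn:bdryint}. Because the sampling wavenumbers are strictly ordered, $i \neq j$ guarantees $k_i^2 \neq k_j^2$, so the system is nonsingular; eliminating $\bm_{ij}$, for instance by scaling \eqref{eqn:sketchone} by $k_j^2$ and \eqref{eqn:sketchtwo} by $k_i^2$ and subtracting, removes the mass block and leaves \eqref{eqn:ddsij}, with the denominator $k_i^2 - k_j^2$ coming exactly from this wavenumber gap. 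I expect no genuine obstacle beyond careful bookkeeping of the conjugations and signs in passing to \eqref{eqn:sketchtwo}; the one structural point to keep in mind is that the construction breaks down for $i=j$, where $k_i^2 - k_j^2 = 0$, which is why the diagonal blocks must be treated separately.
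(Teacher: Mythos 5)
Your proposal is correct and follows essentially the same route as the paper: the paper derives exactly your two relations \eqref{eqn:sketchone} and \eqref{eqn:sketchtwo} (as equations (A.2) and (A.4) in its proof of Proposition~\ref{prop:mij}, the second obtained by conjugating the swapped-index relation and using \eqref{eqn:mat3hermitblock} together with the complex symmetry $\overline{\bd_i}=\bd_i^*$), then multiplies them by $k_j^2$ and $k_i^2$ respectively and subtracts to eliminate $\bm_{ij}$. The bookkeeping you anticipate works out exactly as you describe.
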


\begin{proposition}
\label{prop:sjj}
The diagonal blocks $\bs_{jj} \in \C^{m \times m}$, $j = 1,\ldots,n$, 
of the stiffness matrix \eqref{eqn:sij} are given by
\begin{equation}
\bs_{jj} = \frac{1}{2} \Big(  k_j \Re ( \partial_k \bd_j )+ 2 \Re(\bd_j) \Big) 
+ \frac{\imath k_j^2}{2} \bc_j.
\label{eqn:ddsjj}
\end{equation}
\end{proposition}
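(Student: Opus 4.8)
The plan is to reduce the diagonal block $\bs_{jj}$ to the (unmeasurable, bulk) mass block $\bm_{jj}$, then produce a second, independent data-driven formula for $\bm_{jj}$ itself, and finally substitute. Note that the off-diagonal strategy of Proposition~\ref{prop:sij} cannot be reused: setting $i=j$ turns its difference quotient into $0/0$, so the diagonal case must instead exploit the $k$-derivative of the snapshots guaranteed by Theorem~\ref{thm:uderiv}. Throughout I would use that $q$ and the sources $p_s$ are real, the Hermitian and complex-symmetric structure in \eqref{eqn:mat3hermitblock}, \eqref{eqn:dblocksym} and \eqref{eqn:dkdblocksym}, and the skew-Hermitian identity \eqref{eqn:cblockasym}.

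First I would test the weak form \eqref{eqn:weaksom} for source $r$ at $k=k_j$ against $v=u_j^{(s)}$ and recognize the first two bulk integrals as $[\bs_{jj}]_{rs}$; this yields $\bs_{jj}=k_j^2\bm_{jj}-\imath k_j\bb_{jj}+\bd_j$. Repeating the computation starting from the complex conjugate of \eqref{eqn:weaksom} (as in the proof of Proposition~\ref{prop: complex symmetric data}), tested against $v=u_j^{(r)}$, gives the companion identity $\bs_{jj}=k_j^2\bm_{jj}+\imath k_j\bb_{jj}+\bd_j^*$, where the conjugated boundary term is identified as $\bd_j^*$ via the reciprocity \eqref{eqn:dblocksym}. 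Averaging the two identities cancels the boundary matrix $\bb_{jj}$ and leaves
\[
\bs_{jj}=k_j^2\bm_{jj}+\tfrac{1}{2}\left(\bd_j+\bd_j^*\right)=k_j^2\bm_{jj}+\Re(\bd_j),
\]
so it remains only to express the bulk mass block $\bm_{jj}$ in terms of the data.

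For the mass block I would differentiate \eqref{eqn:weaksom} in $k$ at $k_j$, obtaining a weak form for $w_j^{(s)}:=\partial_k u_j^{(s)}$ with bulk source $2k_j u_j^{(s)}$ and extra boundary contribution $\imath u_j^{(s)}$, consistent with \eqref{eqn:schrodderiv}--\eqref{eqn:bcderiv}. Pairing this derivative weak form with $v=\overline{u_j^{(r)}}$, and eliminating its unknown bulk sesquilinear term by testing the undifferentiated weak form for source $r$ against $v=w_j^{(s)}$ (whose right-hand side is exactly $[\partial_k\bd_j]_{rs}$), I arrive at $2k_j\bm_{jj}=\partial_k\bd_j-\imath\bb_{jj}-2\imath k_j X$, where $X$ has entries $\int_{\partial\Omega}\overline{u_j^{(r)}}\,w_j^{(s)}\,d\Sigma$. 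Carrying out the conjugate version of the same manipulation produces the partner relation $2k_j\bm_{jj}=(\partial_k\bd_j)^*+\imath\bb_{jj}+2\imath k_j Y$, with $Y$ the conjugate cross-term $\int_{\partial\Omega}\overline{w_j^{(r)}}\,u_j^{(s)}\,d\Sigma$. Adding these two relations cancels $\bb_{jj}$ and leaves only $Y-X$, which by \eqref{eqn:cblocks} is precisely the measurable skew-Hermitian block $\bc_j$; hence $\bm_{jj}=\tfrac{1}{2k_j}\Re(\partial_k\bd_j)+\tfrac{\imath}{2}\bc_j$. Substituting this into the boxed Step-1 identity and simplifying then gives exactly \eqref{eqn:ddsjj}.

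The hard part is bookkeeping rather than analysis: every individual bulk integral (the Dirichlet-plus-potential sesquilinear form and the mass integral $\bm_{jj}$) and, crucially, each boundary cross-term $X$ and $Y$ is unmeasurable on its own. The whole point is to choose the test functions and the conjugation pattern so that all of these cancel pairwise, leaving only the data-driven blocks $\bd_j$, $\partial_k\bd_j$, $\bc_j$ and $\bb_{jj}$, and then to use the two symmetrizations (averaging in Step 1, adding in Step 3) that remove even $\bb_{jj}$ and collapse $X,Y$ into $\bc_j=Y-X$. Keeping the complex conjugations consistent — and invoking \eqref{eqn:dblocksym} and \eqref{eqn:dkdblocksym} to identify the conjugated right-hand sides as $\bd_j^*$ and $(\partial_k\bd_j)^*$ — is the delicate step where sign errors are easiest to make.
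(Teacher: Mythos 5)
Your proof is correct, and the final formula matches \eqref{eqn:ddsjj}, but you reach it by a genuinely different route than the paper. The paper's proof sets $\bff_j = k_j^2 \bd_j$, rewrites the off-diagonal formula of Proposition~\ref{prop:sij} as a symmetrized difference quotient, and obtains the diagonal block as the limit $k_j \to k_i$ of $(\bs_{ij}+\bs_{ji})/2$ --- the same limiting device used for the diagonal mass blocks. You instead observe that the Galerkin identity at $i=j$, namely $\bs_{jj} - k_j^2\bm_{jj} + \imath k_j \bb_{jj} = \bd_j$, is perfectly well defined (only the subtraction that eliminates $\bm_{ij}$ in the off-diagonal case degenerates), average it with its conjugate to cancel $\bb_{jj}$, and arrive at the exact algebraic relation $\bs_{jj} = k_j^2\bm_{jj} + \Re(\bd_j)$, which reduces the proposition to the mass-diagonal formula \eqref{eqn:ddmjj} with no limit taken on the stiffness side. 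Your Step 2 then re-derives \eqref{eqn:ddmjj} by differentiating the variational identity in $k$ and cancelling the unmeasurable cross terms $X$ and $Y$ via $\bc_j = Y - X$; I checked the intermediate identities $2k_j\bm_{jj} = \partial_k\bd_j - \imath\bb_{jj} - 2\imath k_j X$ and its adjoint, and they are correct, though this step is strictly redundant since Proposition~\ref{prop:mjj} is already available and could simply be cited. What your route buys is transparency: the identity $\bs_{jj} - k_j^2\bm_{jj} = \Re(\bd_j)$ makes the consistency between \eqref{eqn:ddsjj} and \eqref{eqn:ddmjj} manifest and avoids repeating the $0/0$ limit argument; what it costs is that the differentiation of the weak form still leans on the Fr\'{e}chet differentiability of Theorem~\ref{thm:uderiv}, so the analytic prerequisites are the same as the paper's. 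One small caution: be explicit about which conjugation convention you differentiate (the paper's weak form conjugates the trial snapshot), since that is exactly where the sign of the $\bc_j$ term is decided.
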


\begin{proposition}
\label{prop:mij}
The off-diagonal blocks $\bm_{ij} \in \C^{m \times m}$, $i,j = 1,\ldots,n$, $i \neq j$, 
of the mass matrix \eqref{eqn:mij} are given by
\begin{equation}
\bm_{ij} = \frac{\bd_{i}^* - \bd_{j}}{k^2_i - k^2_j} 
- \imath \frac{\bb_{ij} }{k_j - k_i}.
\label{eqn:ddmij}
\end{equation}
\end{proposition}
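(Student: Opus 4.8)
The plan is to derive two linear algebraic relations connecting the four blocks $\bs_{ij}$, $\bm_{ij}$, $\bb_{ij}$ and the data blocks $\bd_i,\bd_j$, and then to eliminate between them the stiffness block $\bs_{ij}$, which involves unknown bulk integrals. Both relations come from the weak formulation \eqref{eqn:weaksom} evaluated with the snapshots themselves as test functions. Throughout I use that $q$ and each $p_s$ are real-valued, that the Euclidean dot product and the $L^2$ pairings are symmetric, and that the entrywise definitions \eqref{eqn:sij}, \eqref{eqn:mij} and \eqref{eqn:bblocks} let me read off the blocks $\bs_{ij}$, $\bm_{ij}$, $\bb_{ij}$ directly from the bulk and boundary integrals of $\overline{u_i^{(r)}} u_j^{(s)}$.

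For the first relation I take \eqref{eqn:weaksom} written for the snapshot $u_i^{(r)}$ (wavenumber $k_i$, source $r$) and insert the test function $v = u_j^{(s)}$. The gradient and potential terms assemble into $[\bs_{ij}]_{rs}$, the $-k_i^2$ term into $-k_i^2[\bm_{ij}]_{rs}$, the boundary term into $\imath k_i[\bb_{ij}]_{rs}$, while the right-hand side is $\int_{\partial\Omega} p_r u_j^{(s)}\, d\Sigma = [\bd_j]_{rs}$ by \eqref{eqn:dblocks}. In block form this reads
\[
\bs_{ij} - k_i^2 \bm_{ij} + \imath k_i \bb_{ij} = \bd_j .
\]

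For the second relation I write \eqref{eqn:weaksom} for $u_j^{(s)}$ with test function $v = u_i^{(r)}$, whose right-hand side is $[\bd_i]_{sr}$, and then take the complex conjugate of the whole scalar identity. Conjugation turns the integrand $\overline{u_j^{(s)}}\,u_i^{(r)}$ (and its gradient analogue) into $\overline{u_i^{(r)}}\,u_j^{(s)}$, so the gradient-plus-potential terms again give $[\bs_{ij}]_{rs}$ and the mass and boundary integrals give $[\bm_{ij}]_{rs}$ and $[\bb_{ij}]_{rs}$; crucially the $\imath k_j$ boundary term picks up a sign, and the conjugated right-hand side is $\overline{[\bd_i]_{sr}} = [\bd_i^*]_{rs}$. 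In block form,
\[
\bs_{ij} - k_j^2 \bm_{ij} - \imath k_j \bb_{ij} = \bd_i^* .
\]

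Subtracting the first block relation from the second eliminates $\bs_{ij}$ and yields $(k_i^2 - k_j^2)\bm_{ij} - \imath(k_i + k_j)\bb_{ij} = \bd_i^* - \bd_j$. Since the sampling wavenumbers are distinct and positive and $i \neq j$, the factor $k_i^2 - k_j^2$ is nonzero, so I may divide; using $(k_i+k_j)/(k_i^2-k_j^2) = 1/(k_i-k_j)$ on the boundary term produces exactly \eqref{eqn:ddmij}. I expect the only real pitfall to be the bookkeeping of the conjugations in the second step — getting the signs of the $k_j^2$ and $\imath k_j$ terms right and confirming $\overline{[\bd_i]_{sr}} = [\bd_i^*]_{rs}$ — rather than any analytic difficulty.
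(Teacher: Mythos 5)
Your proof is correct and follows essentially the same route as the paper: evaluate the weak form at $k_i$ and $k_j$ with the snapshots as mutual test functions, conjugate the second identity, and subtract to eliminate $\bs_{ij}$. The only (harmless) difference is that you identify $\overline{[\bd_i]_{sr}}$ directly as $[\bd_i^*]_{rs}$ from the definition of the adjoint, whereas the paper routes this step through the complex symmetry of $\bd_i$ (Proposition~\ref{prop: complex symmetric data}); your shortcut is valid and slightly more economical.
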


\begin{proposition}
\label{prop:mjj}
The diagonal blocks $\bm_{jj} \in \C^{m \times m}$, $j = 1,\ldots,n$, 
of the mass matrix \eqref{eqn:mij} are given by
\begin{equation}
\bm_{jj} = \frac{1}{2 k_j} \Re ( \partial_k \bd_j )
+ \frac{\imath}{2} \bc_j.
\label{eqn:ddmjj}
\end{equation}
\end{proposition}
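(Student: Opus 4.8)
The plan is to produce two independent Galerkin identities at the single wavenumber $k_j$, by pairing the weak form \eqref{eqn:weaksom} and the $k$-derivative problem of Theorem~\ref{thm:uderiv} against the snapshots and their derivatives, and then to eliminate every quantity that is not already contained in the boundary data \eqref{eqn:bdryint}. For fixed $j$ I would introduce the auxiliary $m\times m$ blocks obtained by inserting the derivative snapshot $\partial_k u_j^{(r)}$ into the first argument of the sesquilinear forms \eqref{eqn:ops}--\eqref{eqn:opb},
\[
[\widetilde\bM_j]_{rs} = \int_\Omega \overline{\partial_k u_j^{(r)}}\, u_j^{(s)}\,d\bx, \qquad [\widetilde\bB_j]_{rs} = \int_{\partial\Omega}\overline{\partial_k u_j^{(r)}}\, u_j^{(s)}\,d\Sigma,
\]
together with the analogous stiffness block $\widetilde\bS_j$. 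The Hermitian symmetry \eqref{eqn:mat3hermit} of $\cS,\cM,\cB$ shows that inserting $\partial_k u_j^{(s)}$ into the \emph{second} argument instead produces exactly the adjoints $\widetilde\bS_j^*,\widetilde\bM_j^*,\widetilde\bB_j^*$; this is the observation that lets me pair the forms in two distinct ways.

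First I would test the boundary value problem \eqref{eqn:schrodderiv}--\eqref{eqn:bcderiv} for $\partial_k u_j^{(r)}$ against $u_j^{(s)}$, integrating by parts and substituting the impedance condition \eqref{eqn:bcderiv} exactly as in the derivation of \eqref{eqn:weaksom}. Collecting terms over $r,s=1,\dots,m$ yields
\begin{equation}
\widetilde\bS_j - k_j^2 \widetilde\bM_j + \imath k_j \widetilde\bB_j + \imath\,\bb_{jj} = 2 k_j\,\bm_{jj}. \label{eqn:planA}
\end{equation}
Next I would test the original weak form \eqref{eqn:weaksom} for source $r$ against $\partial_k u_j^{(s)}$; its right-hand side equals $\int_{\partial\Omega} p_r\,\partial_k u_j^{(s)}\,d\Sigma = [\partial_k\bd_j]_{rs}$ by \eqref{eqn:dkdblocks}, and the adjoint identification above turns the left-hand side into
\begin{equation}
\widetilde\bS_j^* - k_j^2 \widetilde\bM_j^* + \imath k_j \widetilde\bB_j^* = \partial_k\bd_j. \label{eqn:planC}
\end{equation}

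The bulk quantities $\widetilde\bS_j,\widetilde\bM_j$ are not data, so I would eliminate them by subtracting the conjugate transpose of \eqref{eqn:planC} from \eqref{eqn:planA}; the stiffness and mass terms cancel identically and only a boundary term survives,
\begin{equation}
2 k_j\,\bm_{jj} = 2\imath k_j\,\widetilde\bB_j + \imath\,\bb_{jj} + (\partial_k\bd_j)^*. \label{eqn:planE}
\end{equation}
At this point $\widetilde\bB_j$ is still not directly available: the data fix only its anti-Hermitian part, since $\widetilde\bB_j - \widetilde\bB_j^* = \bc_j$ by \eqref{eqn:cblocks}. The resolution, and the one genuinely delicate step, is to symmetrize: adding \eqref{eqn:planE} to its conjugate transpose and using that $\bm_{jj}$ and $\bb_{jj}$ are Hermitian (diagonal blocks of the Hermitian matrices \eqref{eqn:mat3hermit}) makes the undetermined Hermitian part of $\widetilde\bB_j$ together with $\bb_{jj}$ drop out, leaving
\[
4 k_j\,\bm_{jj} = 2\imath k_j\,\bc_j + \big(\partial_k\bd_j + (\partial_k\bd_j)^*\big).
\]

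Finally, since $\partial_k\bd_j$ is complex symmetric by \eqref{eqn:dkdblocksym}, one has $(\partial_k\bd_j)^* = \overline{\partial_k\bd_j}$ and hence $\partial_k\bd_j + (\partial_k\bd_j)^* = 2\Re(\partial_k\bd_j)$, which reduces the last display to \eqref{eqn:ddmjj}. I expect the main obstacle to be precisely this two-stage elimination: a single pairing leaves the non-data bulk integrals $\widetilde\bS_j,\widetilde\bM_j$, and even after those cancel the boundary integral $\widetilde\bB_j$ remains only partially determined, so the symmetrization via Hermiticity is what is really doing the work in removing its unknown Hermitian part while retaining exactly the skew-Hermitian combination $\bc_j$ that is measurable.
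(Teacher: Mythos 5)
Your proof is correct, and I checked the algebra: identity \eqref{eqn:planA} follows from testing the weak form of \eqref{eqn:schrodderiv}--\eqref{eqn:bcderiv} (supplied by Theorem~\ref{thm:uderiv}) against $u_j^{(s)}$, identity \eqref{eqn:planC} from testing \eqref{eqn:weaksom} against $\partial_k u_j^{(s)}$, the subtraction kills $\widetilde\bS_j$ and $\widetilde\bM_j$, and the final Hermitian symmetrization removes $\bb_{jj}$ and the Hermitian part of $\widetilde\bB_j$ while leaving exactly $\widetilde\bB_j-\widetilde\bB_j^*=\bc_j$ and $\partial_k\bd_j+(\partial_k\bd_j)^*=2\Re(\partial_k\bd_j)$. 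However, this is a genuinely different route from the paper. The paper obtains the diagonal blocks as a limit: it writes $\bm_{ii}=\lim_{k_j\to k_i}\tfrac12(\bm_{ij}+\bm_{ji})$, splits the off-diagonal formula of Proposition~\ref{prop:mij} into its $\bd$-part and $\bb$-part, and evaluates each difference quotient as $k_j\to k_i$, recognizing $\partial_k\bd_i$ and $\bc_i$ as the resulting derivatives. That argument is short but depends on Proposition~\ref{prop:mij} as a prerequisite and on the smooth dependence of the snapshots on $k$ to justify the limits. Your argument works entirely at the single wavenumber $k_j$, is independent of the off-diagonal result, and replaces the limiting process by an explicit Galerkin pairing with the derivative snapshots; the price is that you must invoke the weak formulation of the derivative problem from Theorem~\ref{thm:uderiv} and carry the auxiliary non-data blocks $\widetilde\bS_j,\widetilde\bM_j,\widetilde\bB_j$ through a two-stage elimination. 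Structurally your two-identity subtraction mirrors the paper's own proof of Proposition~\ref{prop:mij} (pairing at two wavenumbers and subtracting), with the second wavenumber replaced by the $k$-derivative at the same wavenumber, which is a clean way to see why the diagonal formula is the ``confluent'' case of the off-diagonal one.
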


We summarize data-driven stiffness and mass matrix computation in Algorithm~\ref{alg:ddrom} below.

\begin{algorithm}
\caption{Data-driven stiffness and mass matrix computation}
\label{alg:ddrom}
\begin{algorithmic}
\State \textbf{Input:} boundary data 
$\cD = \left\{ \phi_j^{(s)} , \partial_k \phi_j^{(s)}  \right\}_{j=1,\ldots,n; s=1,\ldots,m}$.
\State $\bullet$ Compute the blocks
\begin{align}
[\bb_{ij}]_{rs} & 
= \int_{\partial \Omega} \overline{\phi^{(r)}_i} \phi^{(s)}_j d\Sigma, 
\label{eqn:abblocks} \\
[\bc_{j}]_{rs} &
=  \int_{\partial \Omega} \left[ -\overline{\phi^{(r)}_j } \partial_k \phi^{(s)}_j
+ \phi^{(s)}_j \overline{\partial_k \phi^{(r)}_j} \right] d\Sigma,
\label{eqn:acblocks} \\
[\bd_j]_{rs} & 
= \int_{\partial \Omega} p_r \phi_j^{(s)} d \Sigma, 
\label{eqn:adblocks} \\
[\partial_k \bd_j]_{rs} & 
= \int_{\partial \Omega} p_r \partial_k \phi_j^{(s)} d \Sigma,
\label{eqn:adkdblocks}
\end{align}
for $i,j = 1,\ldots,n$, $r,s = 1,\ldots,m$.
\State $\bullet$ Compute the blocks $\bs_{ij} \in \C^{m \times m}$ and $\bm_{ij}$, $i,j = 1,\ldots,n$, 
using formulas \eqref{eqn:ddsij}--\eqref{eqn:ddmjj}.
\State \textbf{Output:} mass and stiffness matrices $\bM$ and $\bS$.
\end{algorithmic}
\end{algorithm}

\subsubsection{Orthogonal projection ROM via block-Lanczos algorithm}
\label{sec:lanczos}

In view of relation  \eqref{eqn:sij}, we observe that the dependence of the stiffness matrix $\bS$ on the 
potential of interest $q$ is affine provided the snapshots are known. This dependence is 
approximately affine if the snapshots depend weakly on $q$. If neither is the case, one may 
wish to transform the ROM matrix $\bS$ further so that it corresponds to an orthogonal projection 
of $\cS$ onto the reduced order space \eqref{eqn:romspace}. This requires introduction of the so-called 
orthogonalized snapshots $v_j^{(s)}$ that form an orthonormal basis for the reduced space $\mathcal{X}$:
\begin{equation}
\mathcal{X} = \text{span} \left\{v_j^{(s)} \right\}_{ j = 1,...,n; \ s = 1,...,m },
\label{eqn:vspan}
\end{equation}
where orthonormality is understood as
\begin{equation}
\int_\Omega \overline{v_i^{(r)}} v_j^{(s)} d \bx = \delta_{ij} \delta_{rs}, \quad
i,j = 1,\ldots,n, \quad r,s=1,\ldots,m.
\label{eqn:vorth}
\end{equation}

We are interested in a specific orthonormal basis so that the projection of $\cS$ onto $\mathcal{X}$ in this basis
has the block-tridiagonal form
\begin{equation}
\bT = \begin{bmatrix}
\balpha_1  & \bbeta_2 & \bzero & \hdots & \bzero \\
\bbeta_2^* & \balpha_2 & \bbeta_3 & \ddots & \vdots \\
\bzero & \bbeta_3^* & \ddots & \ddots & \bzero \\
\vdots & \ddots & \ddots & \balpha_{n-1} & \bbeta_n \\
\bzero & \hdots & \bzero & \bbeta_n^* & \balpha_n
\end{bmatrix} \in \C^{mn \times mn},
\end{equation}
with blocks 
$\balpha_i, \bbeta_j \in \C^{m \times m}$ given entrywise by
\begin{align}
[\balpha_{i}]_{rs} & = \left\langle \cS v_i^{(r)}, v_i^{(s)} \right\rangle_{H^1(\Omega)^\prime}  = 
 \int_\Omega \overline{\nabla v^{(r)}_i} \cdot \nabla v^{(s)}_i d\bx 
+ \int_\Omega q \overline{ v^{(r)}_i} v^{(s)}_i d\bx, & i=1,\ldots,n, \\
[\bbeta_{j}]_{rs} & = \left\langle \cS v_{j-1}^{(r)}, v_{j}^{(s)} \right\rangle_{H^1(\Omega)^\prime}  = 
 \int_\Omega \overline{\nabla v^{(r)}_{j-1}} \cdot \nabla v^{(s)}_{j} d\bx 
+ \int_\Omega q \overline{ v^{(r)}_{j-1}} v^{(s)}_{j} d\bx, & j=2,\ldots,n.
\end{align}
Such a basis can be computed by applying the block-Lanczos process to the matrix
\begin{equation}
\widetilde{\bS} = \bM^{-1/2} \bS \bM^{-1/2} \in \C^{mn \times mn}
\label{eqn:stilde}
\end{equation}
and the starting column-block
\begin{equation}
\tbd = \hl{\bM^{1/2}}
\begin{bmatrix}
\overline{\bd_1} \\ \overline{\bd_2} \\ \vdots \\ \overline{\bd_n}
\end{bmatrix} \in \C^{mn \times m}.
\label{eqn:btilde}
\end{equation}

Note that since $\bM$ is positive-definite, as mentioned in Section~\ref{sec:projrom}, there exists 
a unique positive-definite matrix square root $\bM^{1/2} \in \C^{mn \times mn}$. 
The presence of the terms $\bM^{-1/2}$ and $\bM^{1/2}$ in \eqref{eqn:stilde} and \eqref{eqn:btilde},
respectively, enables the use of the regular block-Lanczos process based on the standard 
$\C^{mn}$-inner product. 
Alternatively, one may apply block-Lanczos directly to ${\bS}$ and the starting column-block 
$[\bd_1, \bd_2, \ldots, \bd_n]^*$, but then a modification is needed for all inner products in 
block-Lanczos process to be computed in $\bM$-weighted inner product. In any case, the resulting 
matrix $\bT$ is the same, so we opt for a simpler variant of block-Lanczos as discussed below.

The block-Lanczos process given in Algorithm~\ref{alg:lanczos} computes a block-tridagonal matrix 
$\bT$ and a unitary matrix $\bQ \in \C^{mn \times mn}$ that can be written in block-column form
\begin{equation}
\bQ = [\bq_1, \bq_2, \ldots, \bq_{n}], \quad
\bq_j \in \C^{mn \times m}, \quad 
j = 1,\ldots,n,
\end{equation}
satisfying
\begin{equation}
\bq_i^* \bq_j = \delta_{ij} \bI_m \in \C^{m \times m}, \quad
i, j = 1,\ldots,n,
\label{eqn:qorth}
\end{equation}
where $\bI_m$ is the $m \times m$ identity matrix. The matrix $\bQ$ defines a unitary change of 
coordinates such that
\begin{equation}
\bT = \bQ^* \widetilde{\bS} \bQ = \bQ^* \bM^{-1/2} \bS \bM^{-1/2} \bQ.
\label{eqn:tlanczos}
\end{equation}
\hl{Algorithm~\ref{alg:lanczos} contains an optional reorthogonalization step for increased numerical 
stability. In practice, the computational cost of this step is negligible compared to the overall cost of 
solving the ISP using the transformed ROM matrix $\bT$. Therefore, we perform reorthogonalization at 
every step of Algorithm~\ref{alg:lanczos}.}

Relation \eqref{eqn:tlanczos} implies an explicit formula for the orthogonalized snapshots.
Introducing notation
\begin{equation}
\bu_j(\bx) = \left[ u^{(1)}_j(\bx), u^{(2)}_j(\bx), \ldots, u^{(m)}_j(\bx) \right], \quad
\bv_j(\bx) = \left[ v^{(1)}_j(\bx), v^{(2)}_j(\bx), \ldots, v^{(m)}_j(\bx) \right], 
\end{equation}
with $\bu_j, \bv_j: \Omega \to \C^{1 \times m}$, $j = 1,\dots,n$, we can express the orthogonalized 
snapshots as
\begin{equation}
\left[ \bv_1(\bx) ,\ldots, \bv_n(\bx) \right] = 
\left[ \bu_1(\bx) ,\ldots, \bu_n(\bx) \right] \bM^{-1/2} \bQ, \quad
\bx \in \Omega.
\end{equation}
Observe that \eqref{eqn:mij} can be written as
\begin{equation}
\int_\Omega \bu_i^*(\bx) \bu_j(\bx) d\bx = \bm_{ij}, \quad i,j=1,\ldots,n,
\end{equation}
which in conjunction with \eqref{eqn:qorth} implies
\begin{equation}
\int_\Omega \bv_i^*(\bx) \bv_j(\bx) d\bx = \delta_{ij} \bI_m, \quad i,j=1,\ldots,n,
\end{equation}
which is a matrix form of orthogonality relations \eqref{eqn:vorth}. 

\begin{algorithm}
\caption{Block Lanczos Process}
\label{alg:lanczos}
\begin{algorithmic}
\State \textbf{Input:} matrix $\widetilde{\bS} \in \C^{mn \times mn}$ 
and starting block-column ${\tbd} \in \C^{mn \times m}$. \\
$\bullet$ \text{Set} $\bbeta_1 = \left( \tbd^ * \tbd \right)^{1/2}$;\\
$\bullet$ \text{Set} $\bq_1 = \tbd \ \bbeta_1^{-1}$;\\
$\bullet$ \text{Set} $\bw = \widetilde{\bS} \bq_1$;
\For{$j=1,...,n-1 $}
\begin{enumerate}
\item \text{Set} $\balpha_j = \bw^ * \bq_j$;
\item \text{Update} $\bw = \bw - \bq_j \balpha_j$;
\item \text{If needed, perform reorthogonalization by updating}
\begin{equation*}
\bw = \bw - [\bq_1,\ldots,\bq_j] \left( [\bq_1,\ldots,\bq_j]^* \bw \right);
\end{equation*}
\item \text{Set} $\bbeta_{j+1} = (\bw^* \bw)^{1/2}$;
\item \text{Set} $\bq_{j+1} = \bw \ \bbeta_{j+1}^{-1}$;
\item \text{Set} $\bw = \widetilde{\bS} \bq_{j+1} - \bq_j \bbeta_{j+1}$;
\end{enumerate}
\EndFor
\\
$\bullet$ \text{Set} $\balpha_{n} = \bw^* \bq_n$;\\
\textbf{Output:} the blocks $\balpha_j, \bbeta_j \in \C^{m \times m}$, $j=1,\ldots,n$, of $\bT$ 
and the unitary matrix $\bQ = [\bq_1, \ldots, \bq_n] \in \C^{mn \times mn}$.
\end{algorithmic}
\end{algorithm}

We conclude this section with Algorithm~\ref{alg:ddt} for computing the block-tridiagonal matrix $\bT$
in a data-driven manner.

\begin{algorithm}
\caption{Data-driven block tridiagonal matrix computation}
\label{alg:ddt}
\begin{algorithmic}
\State \textbf{Input:} boundary data 
$\cD = \left\{ \phi_j^{(s)} , \partial_k \phi_j^{(s)}  \right\}_{j=1,\ldots,n; s=1,\ldots,m}$.
\State $\bullet$  Use Algorithm~\ref{alg:ddrom} to compute 
$\bS \in \C^{mn \times mn}$ and $\bM \in \C^{mn \times mn}$ from $\cD$.
\State $\bullet$ Form $\widetilde{\bS} \in \C^{mn \times mn}$ and 
$\widetilde{\bd} \in \C^{mn \times m}$ using \eqref{eqn:stilde} and \eqref{eqn:btilde}, respectively.
\State $\bullet$ Apply Algorithm~\ref{alg:lanczos} to $\widetilde{\bS}$ and $\widetilde{\bd}$
to compute $\bT \in \C^{mn \times mn}$.
\State \textbf{Output:} block-tridiagonal $\bT \in \C^{mn \times mn}$.
\end{algorithmic}
\end{algorithm}

\subsection{Numerical solution of ISP via ROM misfit minimization}

Conventionally, one may solve the ISP numerically in the following manner. 
Consider the mapping from the potential $q$ to the boundary data $\cD$ defined by expressions
\eqref{eqn:schrodom}--\eqref{eqn:bc}, \eqref{eqn:schrodderiv}--\eqref{eqn:bcderiv},
\eqref{eqn:usnap}--\eqref{eqn:data} and denoted by
\begin{equation}
\cD[q] = \left\{ \phi_j^{(s)}[q], \partial_k \phi_j^{(s)}[q] \right\}_{j=1,\ldots,n; s=1,\ldots,m}.
\label{eqn:dataq}
\end{equation}
Then, a typical approach to numerical solution of the ISP is via non-linear least squares
\begin{equation}
\mathop{\text{minimize}}\limits_{\hq \in \cQ} \sum_{j=1}^{n} \sum_{s=1}^{m} 
\int_{\partial \Omega} \left|  \phi_j^{(s)} - \phi_j^{(s)}[\hq] \right|^2 d \Sigma,
\label{eqn:lsdata}
\end{equation}
where $\phi_j^{(s)}$ is the measured data, $\hq$ is the search potential and $\cQ$ is 
some a priori chosen search space, typically finite-dimensional.

The formulation \eqref{eqn:lsdata} is known to have its limitations including possibly
slow convergence and lack of robustness with respect to the initial guess due to non-convexity
of the objective, see for example \cite{10.1093/gji/ggy380,doi:10.1190/geo2020-0851.1} and the 
references therein. On the other hand, extensive studies of using data-driven ROMs for estimating 
the coefficients of wave equations in time domain
suggest that reformulating \eqref{eqn:lsdata} as a ROM misfit minimization often leads to objective
convexification consequently improving convergence and robustness. Thus, we propose two 
possible ROM-based approaches to the numerical solution of the ISP. Both approaches have the form
\begin{equation}
\mathop{\text{minimize}}\limits_{\hq \in \cQ} \cF(\hq) + \mu \cR(\hq),
\label{eqn:lsrom}
\end{equation}
where $\cF(\hq)$ is the ROM misfit functional, $\cR(\hq)$ is the regularization functional and
$\mu > 0$ is the regularization parameter. The two possible choices of ROM misfit functionals are
\begin{equation}
\cF_\bS(\hq) = \big\| \text{Triu}( \bS^{\text{\scriptsize meas}} - \bS[\hq] ) \big\|_2^2
\label{eqn:funs}
\end{equation}
and
\begin{equation}
\cF_\bT(\hq) = \big\| \text{Triu}( \bT^{\text{\scriptsize meas}} - \bT[\hq] ) \big\|_2^2.
\label{eqn:funt}
\end{equation}
The notation in \eqref{eqn:funt}--\eqref{eqn:funs} is as follows. We denote by 
$\text{Triu}: \C^{mn \times mn} \to \C^{mn (mn + 1) /2}$ the operation of taking the entries 
in the upper triangular part of a matrix and stacking them in a vector. 
The stiffness and block-tridiagonal
matrices $\bS^{\text{\scriptsize meas}}$ and $\bT^{\text{\scriptsize meas}}$, respectively, 
are computed from the measured, possibly noisy data $\cD^{\text{\scriptsize meas}}$, 
while their analogues $\bT[\hq]$ and $\bS[\hq]$ are computed from $\cD[\hq]$ corresponding 
to the search potential $\hq$.

Before continuing further, we should discuss the effects of noise in the data on solving the ISP using ROMs. 
We define the noisy measurement data $\cD^{\text{\scriptsize meas}}$ similary to noiseless 
data \eqref{eqn:data}:
\begin{equation}
\cD^{\text{\scriptsize meas}} = \left\{ 
\phi_j^{\text{\scriptsize meas},(s)} = \phi_j^{(s)} + \xi_j^{(s)}, 
\partial_k \phi_j^{\text{\scriptsize meas},(s)} = \partial_k \phi_j^{(s)} + \chi_j^{(s)} 
\right\}_{j=1,\ldots,n; s=1,\ldots,m},
\label{eqn:datan}
\end{equation}
with the exception of adding the terms $\xi_j^{(s)}$ and $\chi_j^{(s)}$ that model the noise in the 
measurements of the wavefield and its derivative with respect to $k$ at the boundary $\partial \Omega$.
Note that the effect of noise on constructing $\bS^{\text{\scriptsize meas}}$ and 
$\bT^{\text{\scriptsize meas}}$ from \eqref{eqn:datan} could be profound. 
In particular, as discussed in Section~\ref{sec:projrom} the mass matrix is 
Hermitian positive definite, and so is the stiffness matrix provided $q(\bx) \geq 0$. This property may be 
broken if $\bM^{\text{\scriptsize meas}}$ and $\bS^{\text{\scriptsize meas}}$ are assembled from noisy 
$\cD^{\text{\scriptsize meas}}$ using Algorithm~\ref{alg:ddrom}. Thus, a modification of 
\eqref{eqn:funs}--\eqref{eqn:funt} is needed to accommodate the noise in \eqref{eqn:datan}.

First, consider modifying \eqref{eqn:funs} to work with noisy data. One may notice that the spectrum
of both $\bM^{\text{\scriptsize meas}}$ and $\bS^{\text{\scriptsize meas}}$ is not affected by the 
noise uniformly. Instead, \hl{for realistic noise models (e.g., the noise model described in Section~\ref{sec:num})},
the small eigenvalues of $\bM^{\text{\scriptsize meas}}$ and $\bS^{\text{\scriptsize meas}}$ are more 
sensitive and thus may cross over into the negative values making the mass and stiffness matrices indefinite. Therefore, one possible approach to modifying
\eqref{eqn:funs} is via spectral projection. Consider the spectral decomposition of the stiffness matrix
\begin{equation}
\bS^{\text{\scriptsize meas}} = \bZ_\bS \bLambda_\bS \bZ_\bS^*,
\label{eqn:eigens}
\end{equation}
where $\bLambda_\bS = \text{diag} \left( \lambda^\bS_1, \lambda^\bS_2, \ldots, \lambda^\bS_{mn} \right)$ 
with eigenvalues sorted as $\lambda^\bS_1 \geq \lambda^\bS_2 \geq \ldots \geq \lambda^\bS_{mn}$, 
and the columns of $\bZ_\bS  \in \C^{mn \times mn}$ are the corresponding eigenvectors of 
$\bS^{\text{\scriptsize meas}}$. Note that due to the presence of noise in 
$\cD^{\text{\scriptsize meas}}$, it is possible for some eigenvalues in \eqref{eqn:eigens}
to be negative. Then, modification of \eqref{eqn:funs} is needed when $\lambda^\bS_{mn} < 0$.
In both cases we choose an integer \hl{$r_{\bS}$} such that
\begin{equation}
r_\bS = \left\{
\begin{array}{rl}
\max\limits_{\lambda^\bS_{mr} \geq |\lambda^\bS_{mn}|} r, & \text{ if } \lambda^\bS_{mn} < 0, \\
n, & \text{ otherwise.}
\end{array}
\right.
\label{eqn:rs}
\end{equation}
This heuristic allows to identify the number of eigenvalues and eigenvectors of 
$\bS^{\text{\scriptsize meas}}$ that are less sensitive to noise. 
Denote by $\bZ_\bS^r \in \C^{mn \times mr_\bS}$ the sub-matrix of $\bZ_\bS$ containing its
first $mr_\bS$ columns. Then, the corresponding stable subspace is 
\begin{equation}
\cZ_\bS^r = \text{colspan}(\bZ_\bS^r),
\end{equation}
with the corresponding spectral projector
\begin{equation}
\bP_\bS^r = \bZ_\bS^r \big( \bZ_\bS^r \big)^*.
\label{eqn:projsr}
\end{equation}
Note that $\bP_\bS^n = \bI_{mn}$, i.e., no projection is needed when $r = n$.
Once the stable subspace is computed, the modified objective \eqref{eqn:funs} becomes
\begin{equation}
\cF_\bS^r(\hq) = \big\| \text{Triu} 
\left[ \bP_\bS^r \left( \bS^{\text{\scriptsize meas}} - \bS[\hq] \right) \bP_\bS^r \right] \big\|_2^2.
\label{eqn:funsr}
\end{equation}

Second, modifying \eqref{eqn:funt} to work with noisy data requires more effort. In this case we 
begin with the spectral decomposition of the mass matrix
\begin{equation}
\bM^{\text{\scriptsize meas}} = \bZ_\bM \bLambda_\bM \bZ_\bM^*,
\label{eqn:eigenm}
\end{equation}
with $\bLambda_\bM = \text{diag} \left( \lambda^\bM_1, \lambda^\bM_2, \ldots, \lambda^\bM_{mn} \right)$ 
and $\lambda^\bM_1 \geq \lambda^\bM_2 \geq \ldots \geq \lambda^\bM_{mn}$. 
Similarly to \eqref{eqn:rs}, we take 
\begin{equation}
r_\bM = \left\{
\begin{array}{rl}
\max\limits_{\lambda^\bM_{mr} \geq |\lambda^\bM_{mn}|} r, & \text{ if } \lambda^\bM_{mn} < 0, \\
n, & \text{ otherwise,}
\end{array}
\right.
\label{eqn:lambdarm}
\end{equation}
and define the stable subspace
\begin{equation}
\cZ_\bM^r = \text{colspan}(\bZ_\bM^r),
\end{equation}
where $\bZ_\bM^r \in \C^{mn \times mr_\bM}$ contains the first $mr_\bM$ columns of 
$\bZ_\bM$. Unlike the construction of \eqref{eqn:funsr}, a number of additional steps is required here.
In particular, relations \eqref{eqn:stilde}--\eqref{eqn:btilde} need to be modified as follows.
Denoting 
\begin{equation}
\bLambda^r_\bM = 
\text{diag} \left( \lambda^\bM_1, \lambda^\bM_2, \ldots, \lambda^\bM_{mr_\bM} \right) 
\in \R^{mr_\bM \times mr_\bM},
\label{eqn:rm}
\end{equation}
we introduce
\begin{equation}
\widetilde{\bS}^r = 
(\bLambda^r_\bM)^{-1/2} (\bZ_\bM^r)^* \; \bS^{\text{\scriptsize meas}} \; \bZ_\bM^r (\bLambda^r_\bM)^{-1/2} 
\in \C^{m r_\bM \times m r_\bM},
\label{eqn:stilder}
\end{equation}
and
\begin{equation}
\tbd^r = \hl{(\bLambda^r_\bM)^{1/2}} (\bZ_\bM^r)^*
\begin{bmatrix}
\overline{\bd_1}^{\text{\scriptsize meas}} \\ \overline{\bd_2}^{\text{\scriptsize meas}} \\ \vdots \\ \overline{\bd_n}^{\text{\scriptsize meas}}
\end{bmatrix} \in \C^{m r_\bM \times m},
\label{eqn:btilder}
\end{equation}
the analogues of \eqref{eqn:stilde} and \eqref{eqn:btilde}, respectively. 
Then, to obtain the modified block tridiagonal matrix $\bT^r$, an analogue of Algorithm~\ref{alg:ddt}
can be formulated as Algorithm~\ref{alg:ddtr} below.

The computation of $\bT[\hq]$ for the search potential $\hq$ should also be modified accordingly to the 
construction of $\bT^r$, as outlined below. Denote by $\bM[\hq]$ and $\bS[\hq]$ the stiffness and mass 
matrices corresponding to the search potential $\hq$. Once the stable subspace is found in 
Algorithm~\ref{alg:ddtr}, compute the projection
\begin{equation}
\bM^r[\hq] = (\bZ_\bM^r)^* \; \bM[\hq] \; \bZ_\bM^r,
\label{eqn:mqr}
\end{equation}
and use it to calculate
\begin{equation}
\widetilde{\bS}^r[\hq] = 
(\bM^r[\hq])^{-1/2} (\bZ_\bM^r)^* \; \bS[\hq] \; \bZ_\bM^r (\bM^r[\hq])^{-1/2} 
\in \C^{m r_\bM \times m r_\bM},
\label{eqn:stildeqr}
\end{equation}
and
\begin{equation}
\tbd^r[\hq] = \hl{\left(\bM^r[\hq]\right)^{1/2}} (\bZ_\bM^r)^*
\begin{bmatrix}
\overline{\bd_1}[\hq] \\ \overline{\bd_2}[\hq] \\ \vdots \\ \overline{\bd_n}[\hq]
\end{bmatrix} \in \C^{m r_\bM \times m}.
\label{eqn:dtildeqr}
\end{equation}
Then, Algorithm~\ref{alg:lanczos} can be applied to $\widetilde{\bS}^r[\hq]$ and $\widetilde{\bd}^r[\hq]$
with $n = r_\bM$ to compute $\bT^r[\hq] \in \C^{m r_\bM \times m r_\bM}$. 
The modification of \eqref{eqn:funt} then takes the form
\begin{equation}
\cF_\bT^r(\hq) = \big\| \text{Triu}( \bT^r - \bT^r[\hq] ) \big\|_2^2.
\label{eqn:funtr}
\end{equation}

\begin{algorithm}
\caption{Data-driven block tridiagonal matrix computation from noisy data}
\label{alg:ddtr}
\begin{algorithmic}
\State \textbf{Input:} noisy data $\cD^{\text{\scriptsize meas}}$ as in \eqref{eqn:datan}.
\State $\bullet$  Use Algorithm~\ref{alg:ddrom} to compute 
$\bS^{\text{\scriptsize meas}} \in \C^{mn \times mn}$ and 
$\bM^{\text{\scriptsize meas}} \in \C^{mn \times mn}$ from $\cD^{\text{\scriptsize meas}}$.
\State $\bullet$ Symmetrize the blocks of $\bM^{\text{\scriptsize meas}}$ and $\bS^{\text{\scriptsize meas}}$
to enforce \eqref{eqn:hermitianMatrices} in case those relations were broken by the presence of noise.
\State $\bullet$ Compute the eigendecomposition \eqref{eqn:eigenm}, choose $r_\bM$ as in \eqref{eqn:rm} and 
form the matrices $\bZ_\bM^r$ and $\bLambda_\bM^r$.
\State $\bullet$ Form $\widetilde{\bS}^r \in \C^{m r_\bM \times m r_\bM}$ and 
$\widetilde{\bd}^r \in \C^{m r_\bM \times m}$ using \eqref{eqn:stilder} and \eqref{eqn:btilder}, respectively.
\State $\bullet$ Apply Algorithm~\ref{alg:lanczos} to $\widetilde{\bS}^r$ and $\widetilde{\bd}^r$
with $n = r_\bM$ to compute $\bT^r \in \C^{m r_\bM \times m r_\bM}$.
\State \textbf{Output:} block-tridiagonal $\bT^r \in \C^{m r_\bM \times m r_\bM}$ and the basis
for the stable subspace contained in $\bZ_\bM^r \in \C^{m n \times m r_\bM}$. 
\end{algorithmic}
\end{algorithm}

As mentioned above, we consider a finite-dimensional search space 
\begin{equation}
\cQ = \text{span} \{ \eta_1(\bx), \ldots, \eta_N(\bx) \},
\end{equation}
with $N < mn (mn + 1) /2$, so that the search potential has the form
\begin{equation}
\hq(\bx; \by) = \sum_{l = 1}^{N} y_l \eta_l(\bx),
\label{eqn:qxy}
\end{equation}
parameterized by the coefficients $\by = [y_1, \ldots, y_N]^T \in \R^{N}$. 
Given the parameterization \eqref{eqn:qxy} we employ Tikhonov-like regularization 
\begin{equation}
\cR(\hq(\cdot;\by)) = \| \by \|_2^2
\end{equation}
with an adaptively chosen regularization parameter $\mu$.

In order to solve \eqref{eqn:lsrom} we employ a variant of regularized Gauss-Newton method
summarized in Algorithm~\ref{alg:gn} below. The algorithm can be applied to minimize either
\eqref{eqn:funtr} or \eqref{eqn:funsr} where we let $\bX \in \{ \bT, \bS \}$ to denote the choice 
between the two. In case of noiseless data, one may simply set $r_\bM = r_\bS = n$. 

\begin{algorithm}
\caption{Gauss-Newton method for solving ISP via regularized ROM mifit minimization}
\label{alg:gn}
\begin{algorithmic}
\State \textbf{Input:} sampling wavenumbers $0 < k_1 < k_2 < \ldots < k_n$ and the
corresponding measured boundary data $\cD^{\text{\scriptsize meas}}$,
initial guess for potential coefficients $\by^{(0)} \in \R^N$, maximum iteration number 
$n_{\text{\scriptsize iter}}$, adaptive regularization parameter $\gamma \in (0, 1)$.
\State $\bullet$ If $\bX = \bS$, use Algorithm~\ref{alg:ddrom} to compute the measured matrix $\bS^{\text{\scriptsize meas}}$ from $\cD^{\text{\scriptsize meas}}$; compute the eigendecomposition
\eqref{eqn:eigens} and use it to find $r = r_\bS$ via \eqref{eqn:rs}, the basis for the stable subspace
$\bZ_\bS^r$ and the spectral projector $\bP_\bS^r$ as in \eqref{eqn:projsr};
\State $\bullet$ If $\bX = \bT$, use Algorithm~\ref{alg:ddtr} to compute the matrix 
$\bT^{r, \text{\scriptsize meas}}$ from $\cD^{\text{\scriptsize meas}}$, as well as $r = r_\bM$
and the the basis for the stable subspace $\bZ_\bM^r$;
\For{$i = 1,2,\ldots,n_{\text{\scriptsize iter}}$}
\begin{enumerate}
\item Solve the forward problems \eqref{eqn:schrodom}--\eqref{eqn:bc} and 
\eqref{eqn:schrodderiv}--\eqref{eqn:bcderiv} with $q = \hq \left(\cdot; \by^{(i-1)} \right)$,
for all sampling wavenumbers to generate the corresponding boundary data $\cD^{(i-1)}$;
\item
Compute $\bM^{(i-1)}$ and $\bS^{(i-1)}$ from $\cD^{(i-1)}$ using Algorithm~\ref{alg:ddrom};
\begin{itemize}
\item If $\bX = \bS$, evaluate the residual 
$\br^{(i-1)} = \text{Triu}\big( \bP_\bS^r (\bS^{\text{\scriptsize meas}} - \bS^{(i-1)}) \bP_\bS^r \big)$;
\item If $\bX = \bT$, compute $\widetilde{\bS}^{r, (i-1)}$ and $\widetilde{\bd}^{r, (i-1)}$ as in
\eqref{eqn:stildeqr} and \eqref{eqn:dtildeqr}, respectively, and apply Algorithm~\ref{alg:lanczos}
with $n = r_\bM$ to compute $\bT^{r, (i-1)}$, then evaluate the residual 
$\br^{(i-1)} = \text{Triu}( \bT^{r, \text{\scriptsize meas}} - \bT^{r, (i-1)} )$;
\end{itemize}
\item Compute the Jacobian
\begin{equation}
\bJ^{(i)} = \nabla_\by \cF_{\bX}^r \left( \hq \left( \cdot; \by^{(i-1)} \right) \right) 
\in \C^{mr (mr + 1) /2 \times N},
\end{equation}
where $\bX \in \{ \bT, \bS \}$;
\item Compute the singular values 
$\sigma_1^{(i)} \geq \sigma_2^{(i)} \geq \ldots \geq \sigma_N^{(i)}$
of $\bJ^{(i)}$ and set Tikhonov regularization parameter to
\begin{equation}
\mu^{(i)} = \left( \sigma^{(i)}_{\lfloor \gamma N \rfloor} \right)^2;
\label{eqn:muadap}
\end{equation}
\item Compute the update direction
\begin{equation}
\bz^{(i)} = - \Re\left[ \left(  \left( \bJ^{(i)} \right)^* \bJ^{(i)} + \mu^{(i)} \bI_N \right)^{-1} 
\left( \bJ^{(i)} \right)^* \br^{(i)} \right];
\end{equation}
\item Use line search to find the step length
\begin{equation}
\alpha^{(i)} = \mathop{\text{argmin}}\limits_{\alpha \in [0, \alpha_{\max}]}
\cF_{\bX} \left( \hq \left( \cdot; \by^{(i-1)} + \alpha \bz^{(i)} \right) \right),
\end{equation}
where $\bX \in \{ \bT, \bS \}$ and we typically take $\alpha_{\max}  = 3$;
\item Update the search potential parameters
\begin{equation}
\by^{(i)} = \by^{(i-1)} + \alpha^{(i)} \bz^{(i)}.
\end{equation}
\end{enumerate}
\EndFor
\State \textbf{Output:} potential estimate
$q^{\text{\scriptsize est}}_{\bX} = \hq \left(\cdot; \by^{(n_{\text{\scriptsize iter}})} \right)$.
\end{algorithmic}
\end{algorithm}

The particular choice of Tikhonov regularization parameter \eqref{eqn:muadap} makes clear
the meaning of the adaptive regularization parameter $\gamma \in (0, 1)$. Since the singular
values of the Jacobian are arranged in the decreasing order, the smaller value of $\gamma$
correspond to more regularization. Typically, one selects $\gamma$ in the range $[0.1, 0.4]$.

\section{Numerical Results}
\label{sec:num}

We provide here the results of numerical experiments for the following ISP setup. 
We consider a 2D ISP in a unit square $\Omega = [0,1] \times [0,1]$ with $m = 8$
extended sources placed at the top boundary 
$\{ \bx = [x_1, 1]^T \;|\; x_1 \in (0,1) \}$, given by
\begin{equation}
p_s(\bx) = \left\{
\begin{tabular}{ll}
1, & if $\quad \dfrac{s-1}{m} + h \leq x_1 \leq \dfrac{s}{m} - h$, \\
0, & otherwise,
\end{tabular}
\right. \quad s = 1,\ldots,m,
\label{eqn:srcnum}
\end{equation}
where the small parameter $h$ is defined below. 
The sampling wavenumbers are
\begin{equation}
k_j = 15 + 5j, \quad j = 1,2,\ldots,n,
\end{equation}
with $n=8$.

The forward problems \eqref{eqn:schrodom}--\eqref{eqn:bc} and 
\eqref{eqn:schrodderiv}--\eqref{eqn:bcderiv} are discretized with finite elements using
Matlab PDE toolbox on a triangular mesh with mesh step $h = 0.03$ resulting in a mesh with 
$5,117$ nodes. The presence of mesh step parameter $h$ in \eqref{eqn:srcnum} guarantees 
that the numerical sources satisfy the non-overlapping condition \eqref{eqn:srcsupp}.

The noiseless synthetic data $\cD$ corresponding to the scattering potential $q^{\text{\scriptsize true}}$
for the numerical experiments is generated by solving \eqref{eqn:schrodom}--\eqref{eqn:bc} 
and \eqref{eqn:schrodderiv}--\eqref{eqn:bcderiv} numerically using the method described above. 
The noise model in the numerical experiments differs slightly from \eqref{eqn:datan}. Explicitly,
instead of adding the noise to the noiseless data $\cD$ to obtain $\cD^{\text{\scriptsize meas}}$
via \eqref{eqn:datan}, the noise is added directly to the blocks $\bb_{ij}$, $\bc_j$, $\bd_j$ and $\partial_k \bd_j$,
$i,j = 1,\ldots,n$, to obtain $\bb_{ij}^{\text{\scriptsize noisy}}$, $\bc_{j}^{\text{\scriptsize noisy}}$,
$\bd_j^{\text{\scriptsize noisy}}$ and $\partial_k \bd_j^{\text{\scriptsize noisy}}$.
The noise added to each entry of the four family of blocks comes from four independent identical normal 
distributions with zero means and standard deviations chosen to satisfy
\begin{equation}
\begin{split}
\frac{\sum\limits_{j=1}^{n} \| \bd_j^{\text{\scriptsize noisy}} - \bd_j \|_F^2}
{\sum\limits_{j=1}^{n} \| \bd_j \|_F^2}  & = 
\frac{\sum\limits_{j=1}^{n} \| \partial_k \bd_j^{\text{\scriptsize noisy}} - \partial_k \bd_j \|_F^2}
{\sum\limits_{j=1}^{n} \| \partial_k \bd_j \|_F^2} \\
& = \frac{\sum\limits_{i \neq j} \| \bb_{ij}^{\text{\scriptsize noisy}} - \bb_{ij} \|_F^2 +
\sum\limits_{j=1}^n \| \bc_{j}^{\text{\scriptsize noisy}} - \bc_{j} \|_F^2}
{\sum\limits_{i \neq j} \| \bb_{ij} \|_F^2 + \sum\limits_{j=1}^{n} \| \bc_{j} \|_F^2} = 
\varepsilon_{\text{\scriptsize noise}}^2,
\end{split}
\end{equation}
where $\varepsilon_{\text{\scriptsize noise}}$ is the desired noise level. For the numerical 
experiments below we take $\varepsilon_{\text{\scriptsize noise}} = 2.5 \cdot 10^{-2}$ 
corresponding to adding $2.5\%$ noise. The blocks are then symmetrized or antisymmetrized, 
as appropriate, to satisfy \eqref{eqn:mat3hermitblock}, \eqref{eqn:cblockasym}, \eqref{eqn:dblocksym} 
and \eqref{eqn:dkdblocksym}. The blocks are then used to compute $\bM^{\text{\scriptsize meas}}$ 
and $\bS^{\text{\scriptsize meas}}$ using formulas \eqref{eqn:ddsij}--\eqref{eqn:ddmjj}.

When implementing the process outline above, it was discovered that due to numerical integration 
errors in computing boundary integrals \eqref{eqn:abblocks}--\eqref{eqn:adkdblocks}, the resulting 
blocks of the stiffness and mass matrices assembled by Algorithm~\ref{alg:ddrom} differ from 
those obtained by (numerical) integration in \eqref{eqn:sij}--\eqref{eqn:mij} even in the absence 
of noise. This introduces a systematic error in ROM construction 
that can be alleviated via error estimation Algorithm~\ref{alg:err} presented below.

\begin{algorithm}
\caption{Boundary integration error estimation}
\label{alg:err}
\begin{algorithmic}
\State \textbf{Input:} reference medium $q_0$.
\State $\bullet$ Solve the forward problems \eqref{eqn:schrodom}--\eqref{eqn:bc} and 
\eqref{eqn:schrodderiv}--\eqref{eqn:bcderiv} numerically for the potential $q_0$
to generate the corresponding boundary data $\cD_0$ and snapshots 
$u^{(s)}_{0, j}$, $\partial_k u^{(s)}_{0, j}$, $j=1,\ldots,n$, $s=1,\ldots,m$.
\State $\bullet$ Use numerical integration on the boundary $\partial \Omega$ to compute the 
blocks \eqref{eqn:abblocks}--\eqref{eqn:adkdblocks} and assemble the corresponding matrices
$\bS_0^{\cD}$ and $\bM_0^{\cD}$ via \eqref{eqn:ddsij}--\eqref{eqn:ddmjj}.
\State $\bullet$ Use numerical integration in the bulk of $\Omega$ to compute the blocks 
\eqref{eqn:sij}--\eqref{eqn:mij} and assemble the corresponding matrices
$\bS_0^{\Omega}$ and $\bM_0^{\Omega}$.
\State $\bullet$ Estimate the boundary integration error for stiffness and mass matrices with
\begin{equation}
\bE_{\bS} = \bS_0^{\cD} - \bS_0^{\Omega}, \text{ and } 
\bE_{\bM} = \bM_0^{\cD} - \bM_0^{\Omega},
\label{eqn:errest}
\end{equation}
respectively.
\State \textbf{Output:} error estimate matrices $\bE_{\bS}$ and $\bE_{\bM}$.
\end{algorithmic}
\end{algorithm}

For the numerical experiments described here it is sufficient to execute Algorithm~\ref{alg:err} for 
$q_0 = 0$ to get accurate enough error estimates \eqref{eqn:errest}. Once the estimates are
obtained, they can be used to correct the mass and stiffness matrices generated by 
Algorithm~\ref{alg:ddrom} by subtracting $\bE_{\bS}$ and $\bE_{\bM}$, respectively.

\begin{figure}
\begin{tabular}{cc}
$q^{\text{\scriptsize 2inc}}$ & $q^{\text{\scriptsize est}}_{\bS}$ \\
\includegraphics[width=0.46\textwidth]{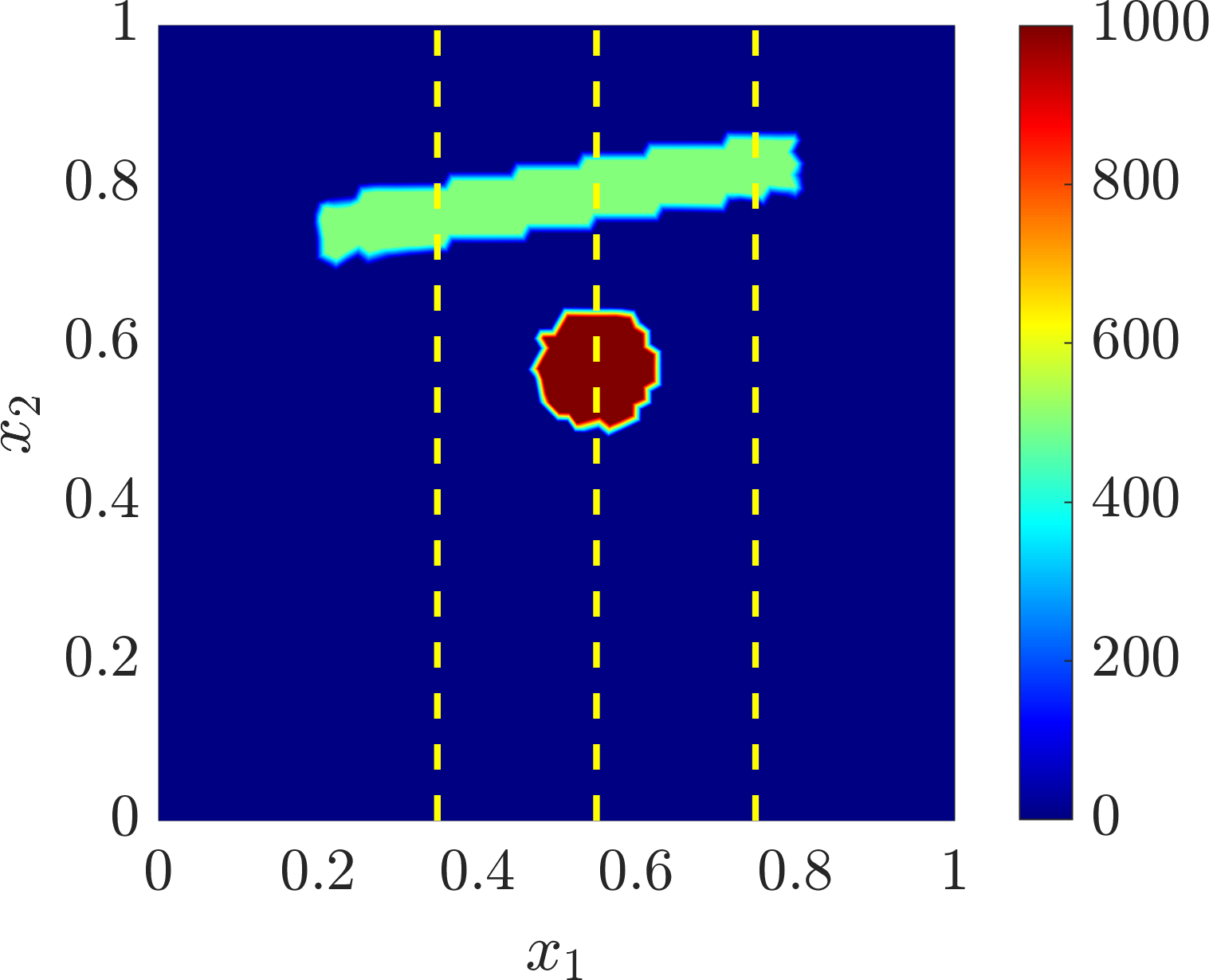} & 
\includegraphics[width=0.46\textwidth]{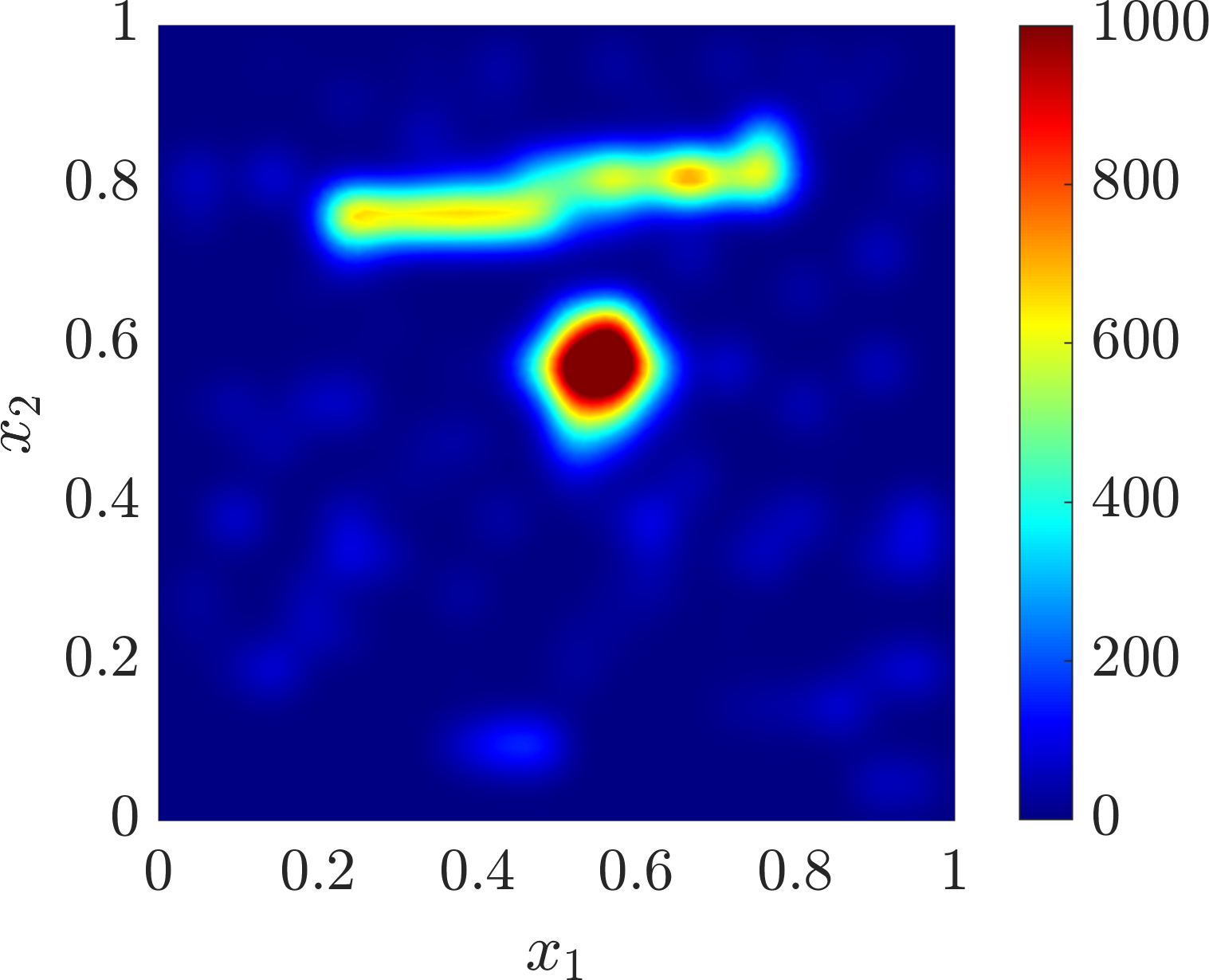} \\
$q^{\text{\scriptsize est}}_{\text{\scriptsize FWI}}$ & $q^{\text{\scriptsize est}}_{\bT}$ \\
\includegraphics[width=0.46\textwidth]{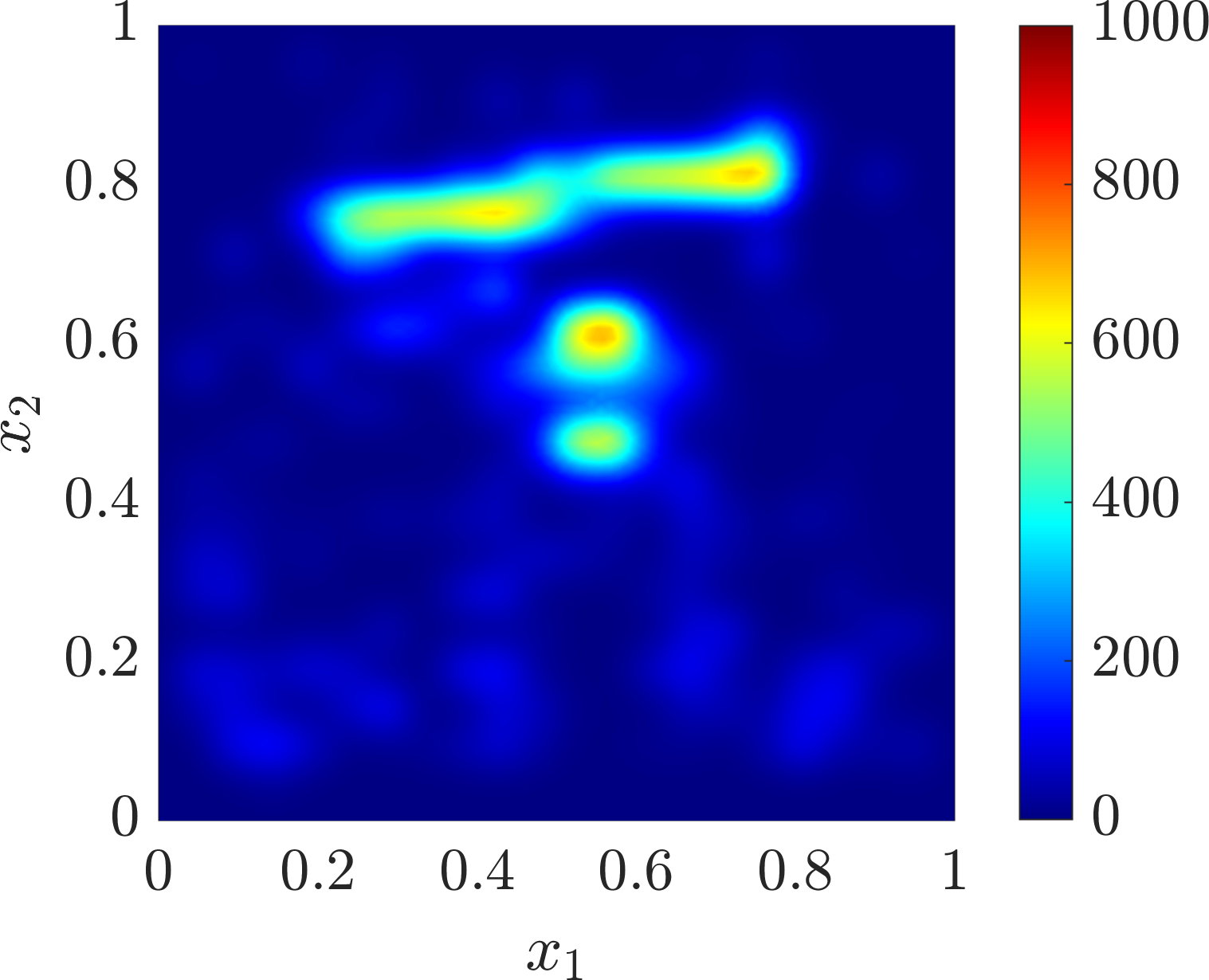} & 
\includegraphics[width=0.46\textwidth]{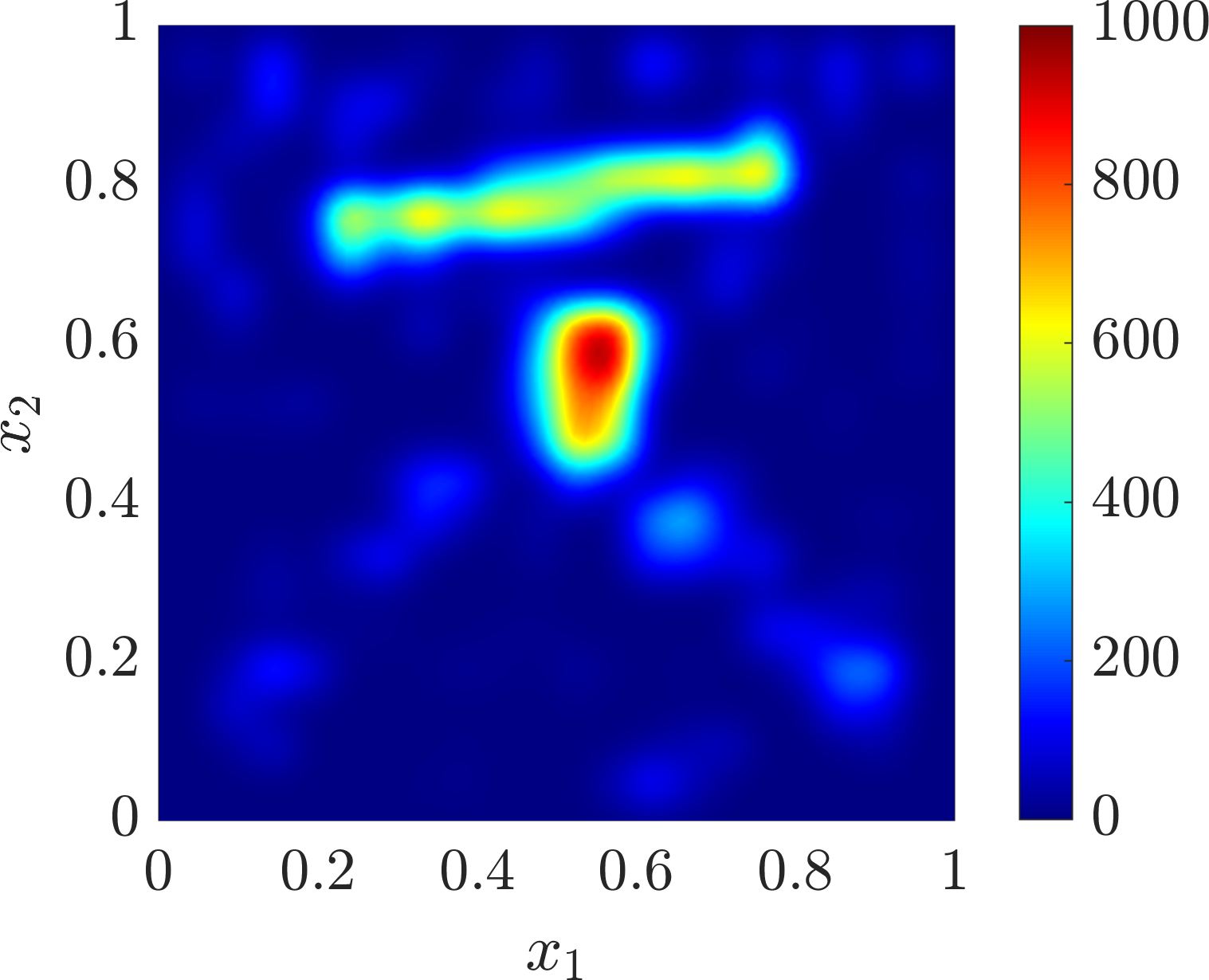} \\
\end{tabular}
\caption{Model used for testing Algorithm~\ref{alg:gn} (top row) and estimated potentials (bottom row).
The dashed yellow lines in top left plot indicate the location of vertical slices shown in 
quality control plots in Figure~\ref{fig:rectschrodoptbslc}.}
\label{fig:rectschrodoptbi}
\end{figure}

To assess the performance of the proposed approach for numerical solution of the ISP we apply 
both variants of Algorithm~\ref{alg:gn} to estimate the potential $q^{\text{\scriptsize 2inc}}$
that contains two inclusions, a slanted weaker extended scatterer and a localized circular scatterer
of higher contrast. The potential $q^{\text{\scriptsize 2inc}}$ is displayed in the top left plot in
Figure~\ref{fig:rectschrodoptbi}. 

To compare the performance of Algorithm~\ref{alg:gn} to the existing methods we implement
a conventional full waveform inversion (FWI) process based on the misfit functional
\begin{equation}
\cF_{\text{\scriptsize FWI}}(\hq) = \sum_{j=1}^{n} \left( 
\left\| \bd_j^{\text{\scriptsize meas}} - \bd_j[\hq] \right\|_F^2 +  
\left\| \partial_k \bd_j^{\text{\scriptsize meas}} - \partial_k  \bd_j[\hq] \right\|_F^2 \right),
\label{eqn:funfwi}
\end{equation}
where the blocks $\bd_j^{\text{\scriptsize meas}}$ and $\partial_k \bd_j^{\text{\scriptsize meas}}$
are computed by Algorithm~\ref{alg:ddrom} from the measured data $\cD^{\text{\scriptsize meas}}$,
while $\bd_j[\hq]$ and $\partial_k  \bd_j[\hq]$ are computed from $\cD[\hq]$ corresponding 
to the search potential $\hq$ using the same algorithm. We apply a regularized Gauss-Newton 
iteration to \eqref{eqn:funfwi} that is essentially Algorithm~\ref{alg:gn} with $\cF_\bX^r$ replaced
with $\cF_{\text{\scriptsize FWI}}$. 

We perform three numerical experiments to compute the estimates of the potential 
$q^{\text{\scriptsize 2inc}}$. First, for the $\bS$-variant of Algorithm~\ref{alg:gn} we use 
noisy data with $\varepsilon_{\text{\scriptsize noise}} = 2.5 \cdot 10^{-2}$ to compute the 
estimate denoted by $q^{\text{\scriptsize est}}_{\bS}$. Second, for the $\bT$-variant of 
Algorithm~\ref{alg:gn} we compute the estimate $q^{\text{\scriptsize est}}_{\bT}$ 
from noiseless data. Third, for the conventional FWI with misfit \eqref{eqn:funfwi} we 
compute the estimate $q^{\text{\scriptsize est}}_{\text{\scriptsize FWI}}$ from the 
noisy data with $\varepsilon_{\text{\scriptsize noise}} = 2.5 \cdot 10^{-2}$.
For all three experiments we set $\gamma = 0.2$. We perform $n_{\text{\scriptsize iter}} = 10$ 
iterations for the first and third experiments, while taking $n_{\text{\scriptsize iter}} = 20$
for the second. The search space $\cQ$ is taken to contain $N = 20 \times 20 = 400$ Gaussian 
basis functions with peaks located on a uniform $20 \times 20$ rectangular grid in $\Omega$.

We display in Figure~\ref{fig:rectschrodoptbi} the estimated potentials 
$q^{\text{\scriptsize est}}_{\bS}$, $q^{\text{\scriptsize est}}_{\bT}$ and 
$q^{\text{\scriptsize est}}_{\text{\scriptsize FWI}}$. We observe first that 
$q^{\text{\scriptsize est}}_{\bS}$ provides the best estimate of the potential. It captures 
very well positions, shapes and magnitudes of both scatterers while having negligibly small
artifacts even though noisy data is used. \hl{The estimate $q^{\text{\scriptsize est}}_{\bT}$
delivers the second best reconstruction capturing well both the locations and magnitudes of 
both scatterers, but introducing some artifacts in the estimate and somewhat distorting 
the shape of the circular scatterer.} The FWI estimate 
$q^{\text{\scriptsize est}}_{\text{\scriptsize FWI}}$ recovers the topmost scatterer well,
but does a poor job with the circular one missing its contrast and also splitting into two 
scatterers, possibly due to the effect of multiple reflections.

\begin{figure}
\begin{tabular}{ccc}
$x_1 = 0.35$ & $x_1 = 0.55$ & $x_1 = 0.75$ \\
\includegraphics[width=0.305\textwidth]{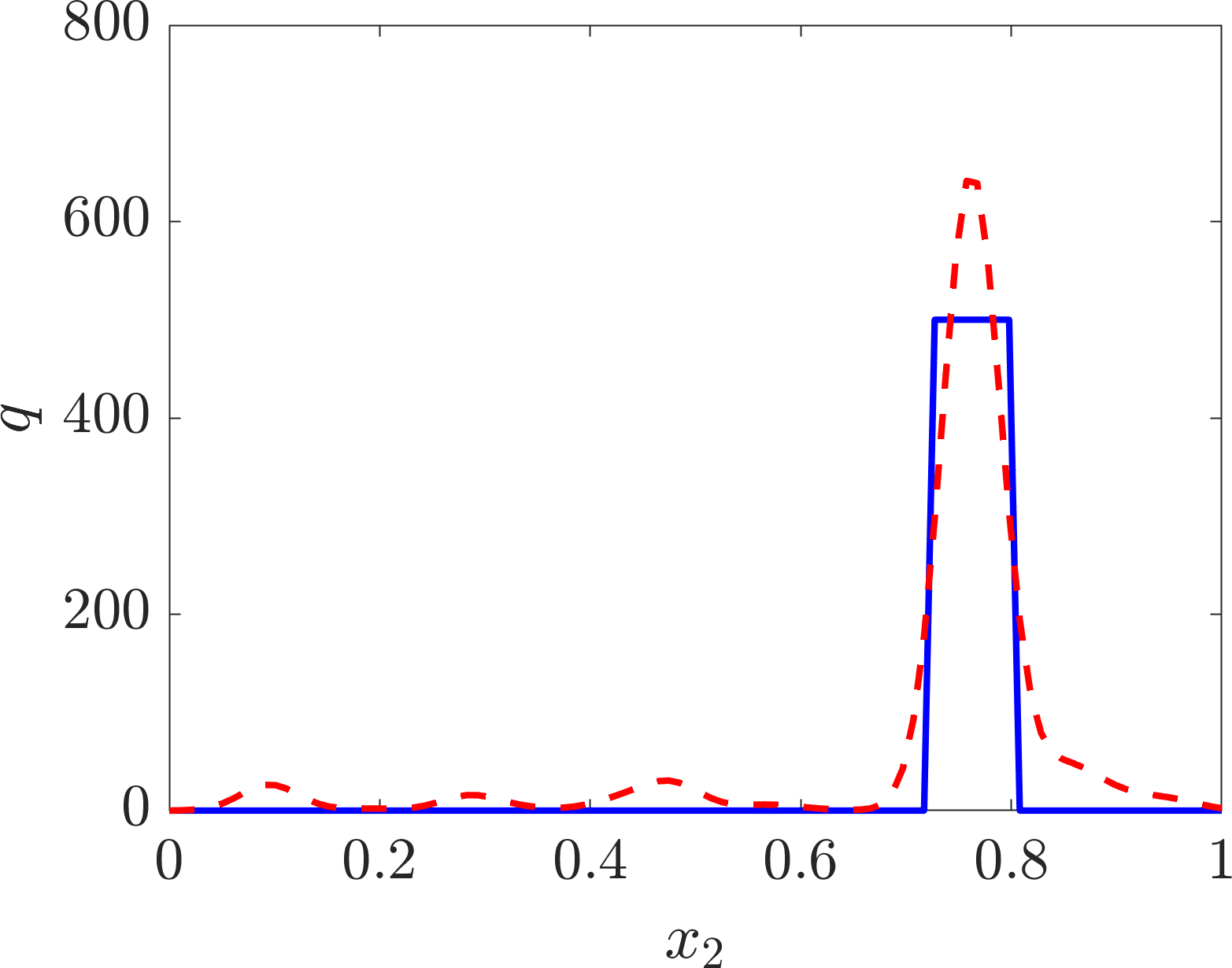} & 
\includegraphics[width=0.305\textwidth]{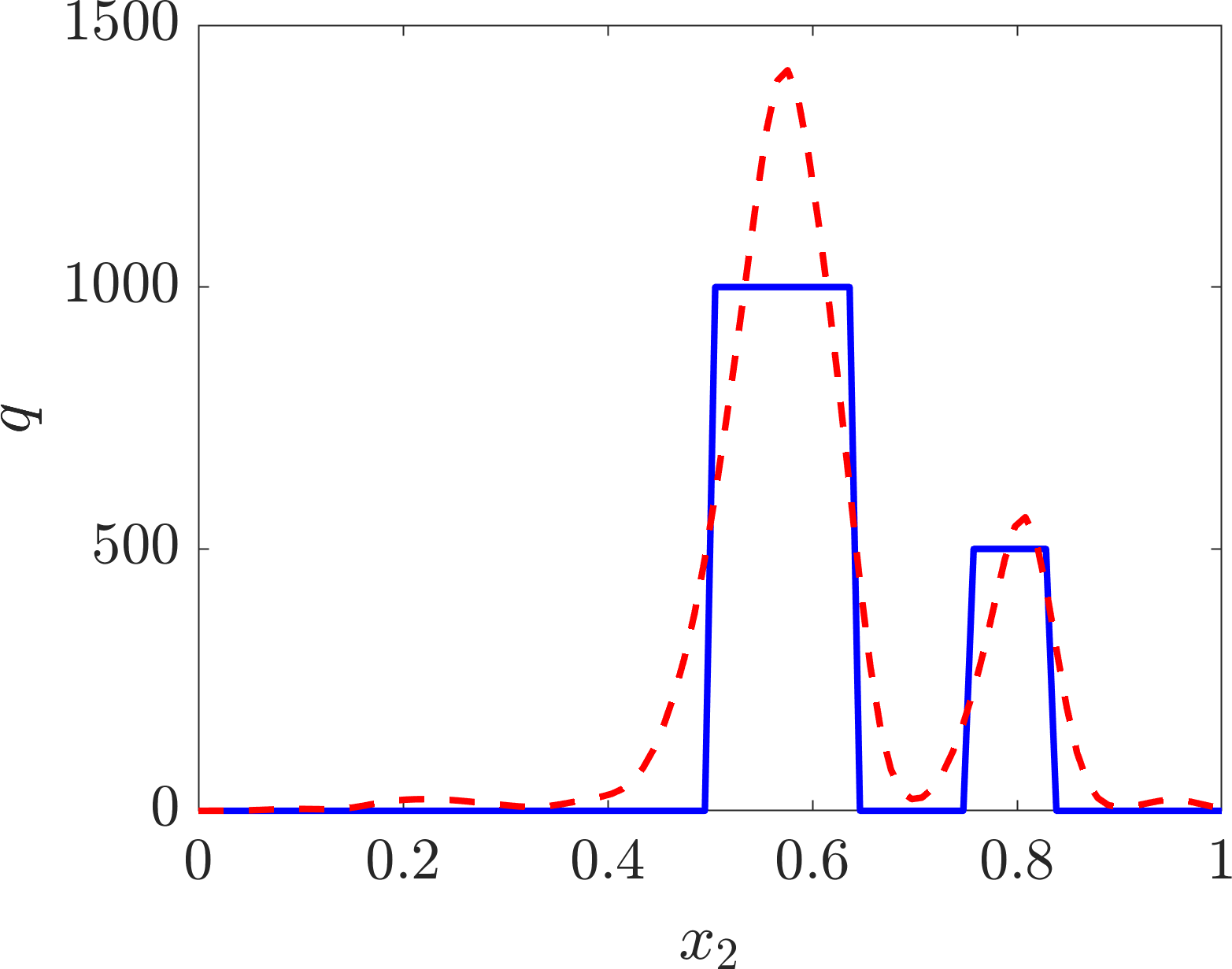} & 
\includegraphics[width=0.305\textwidth]{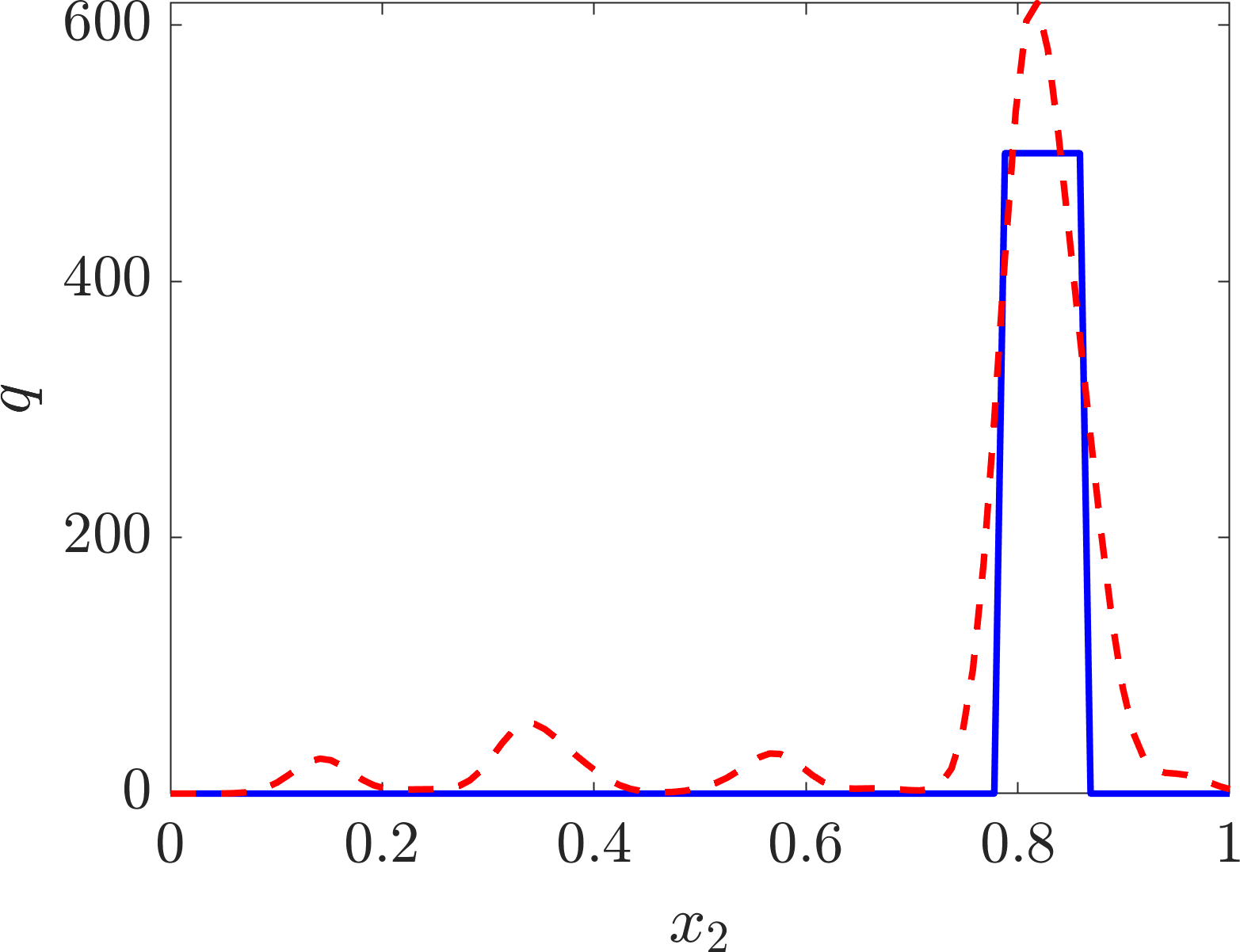} \\
\includegraphics[width=0.305\textwidth]{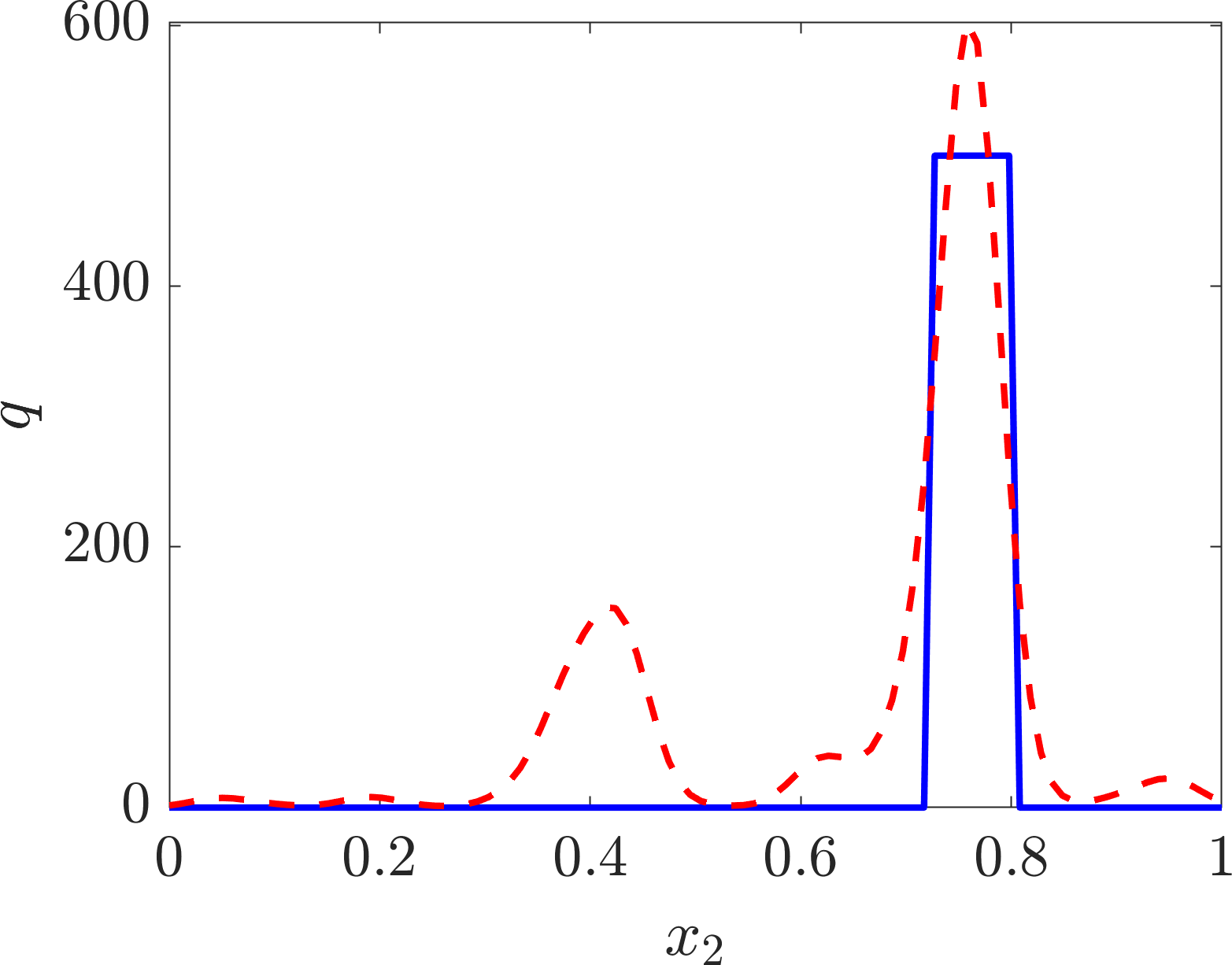} & 
\includegraphics[width=0.305\textwidth]{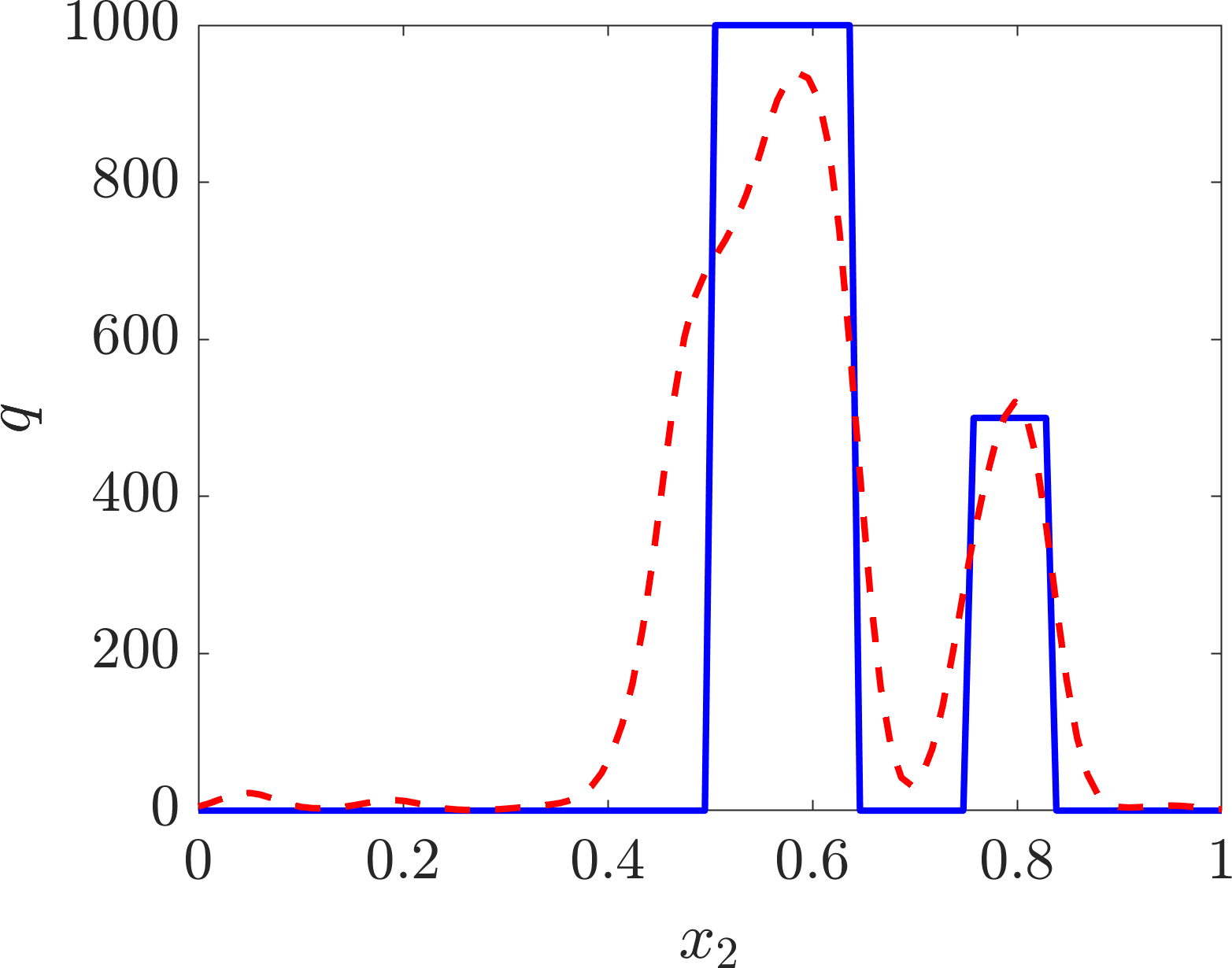} & 
\includegraphics[width=0.305\textwidth]{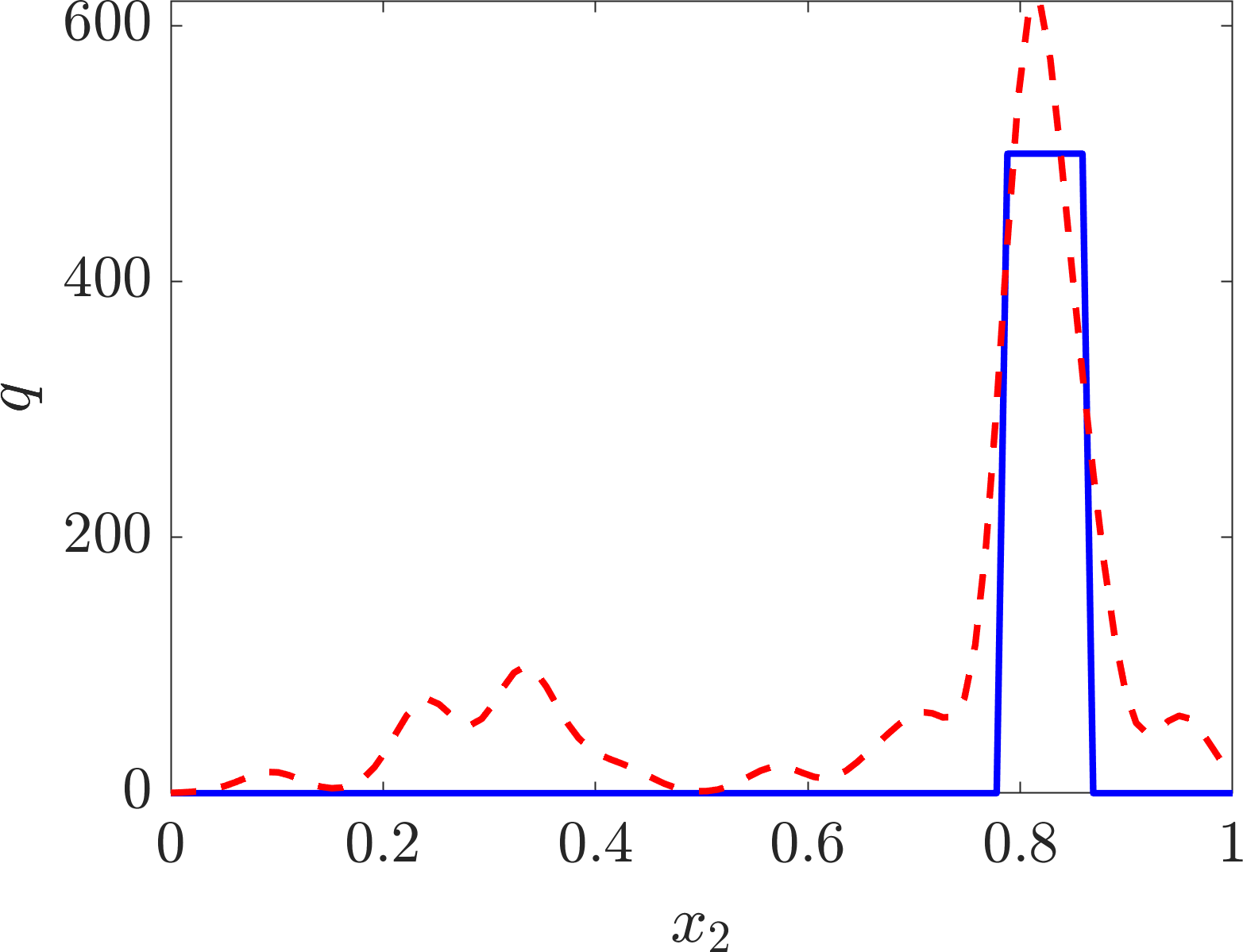} \\
\includegraphics[width=0.305\textwidth]{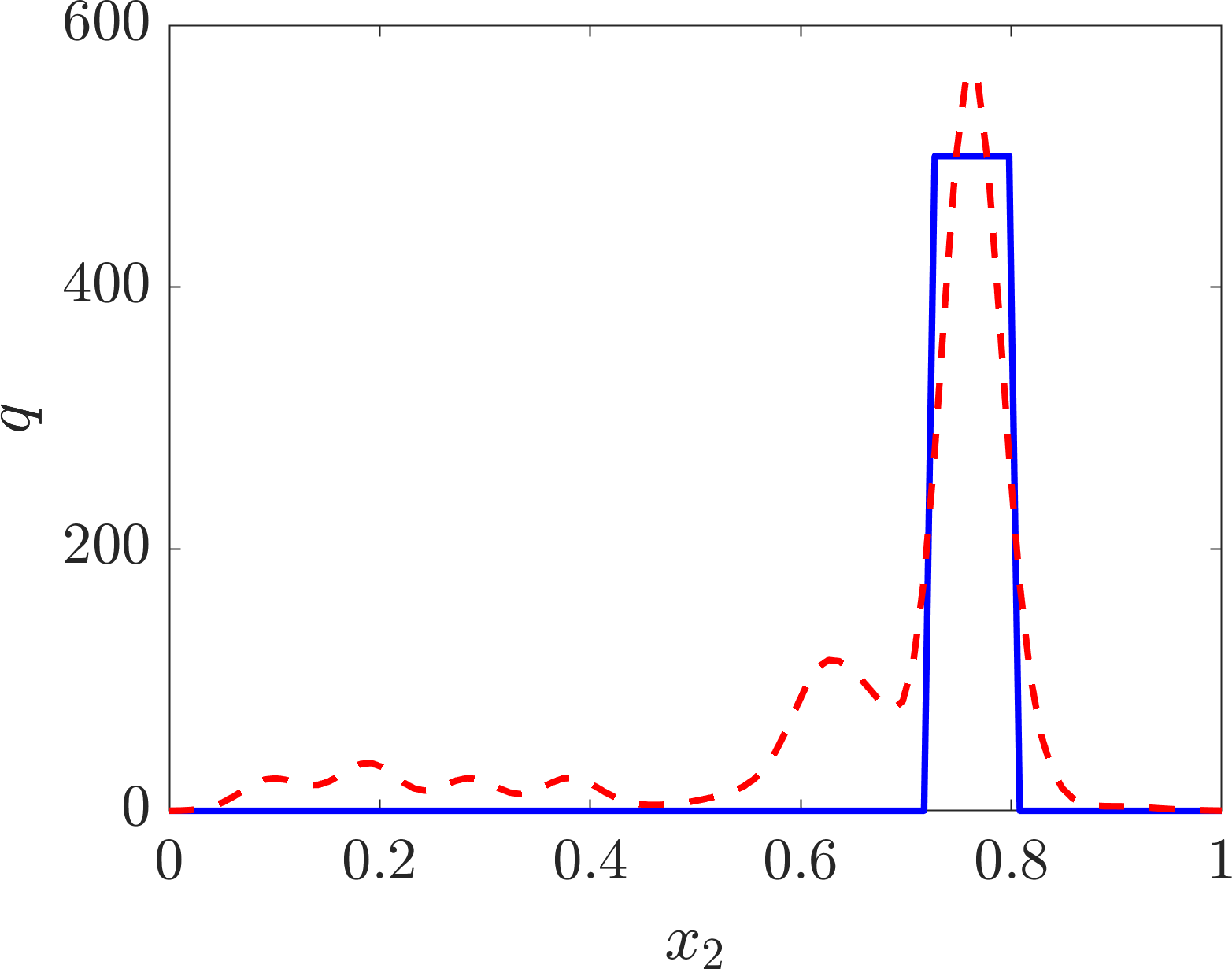} & 
\includegraphics[width=0.305\textwidth]{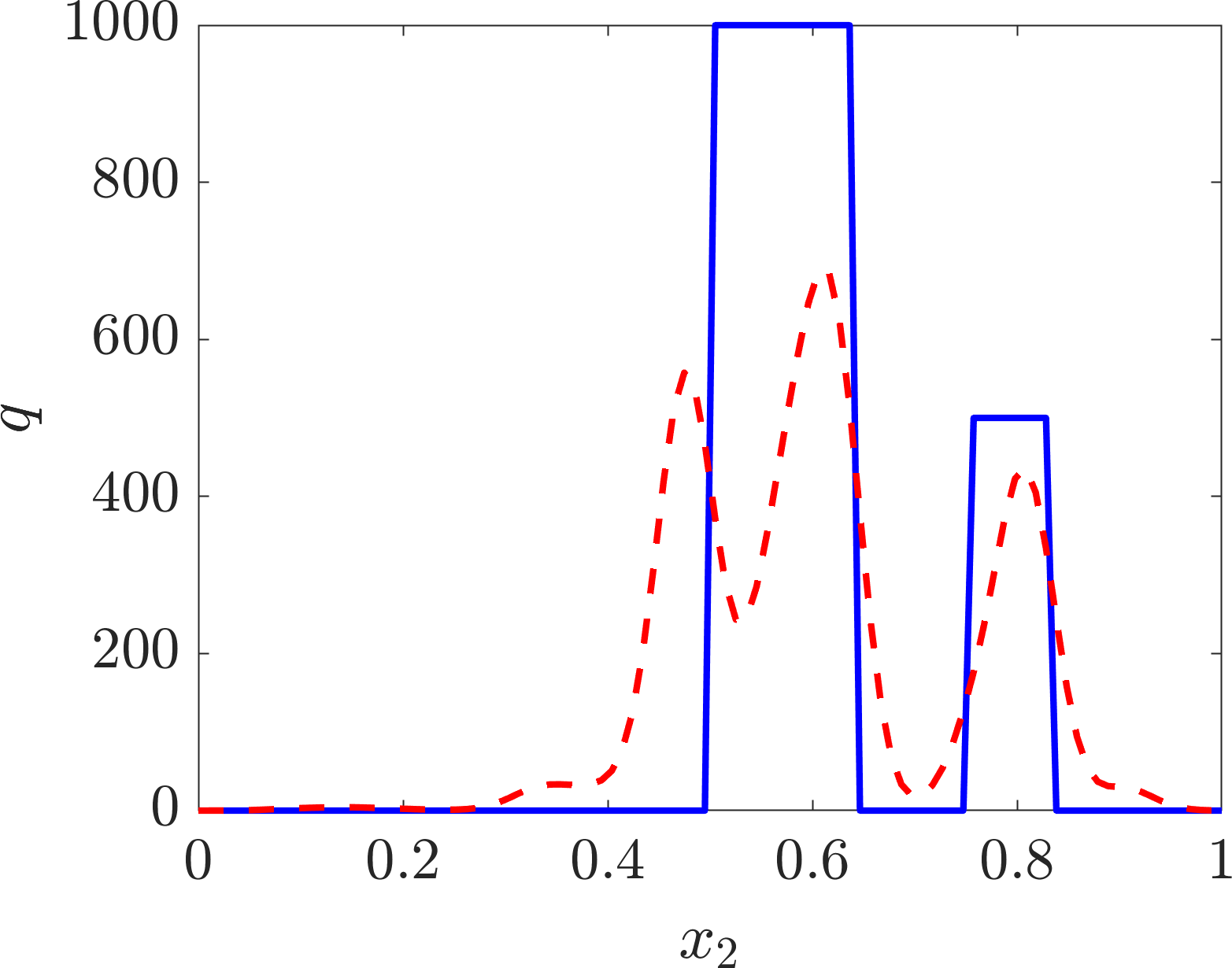} & 
\includegraphics[width=0.305\textwidth]{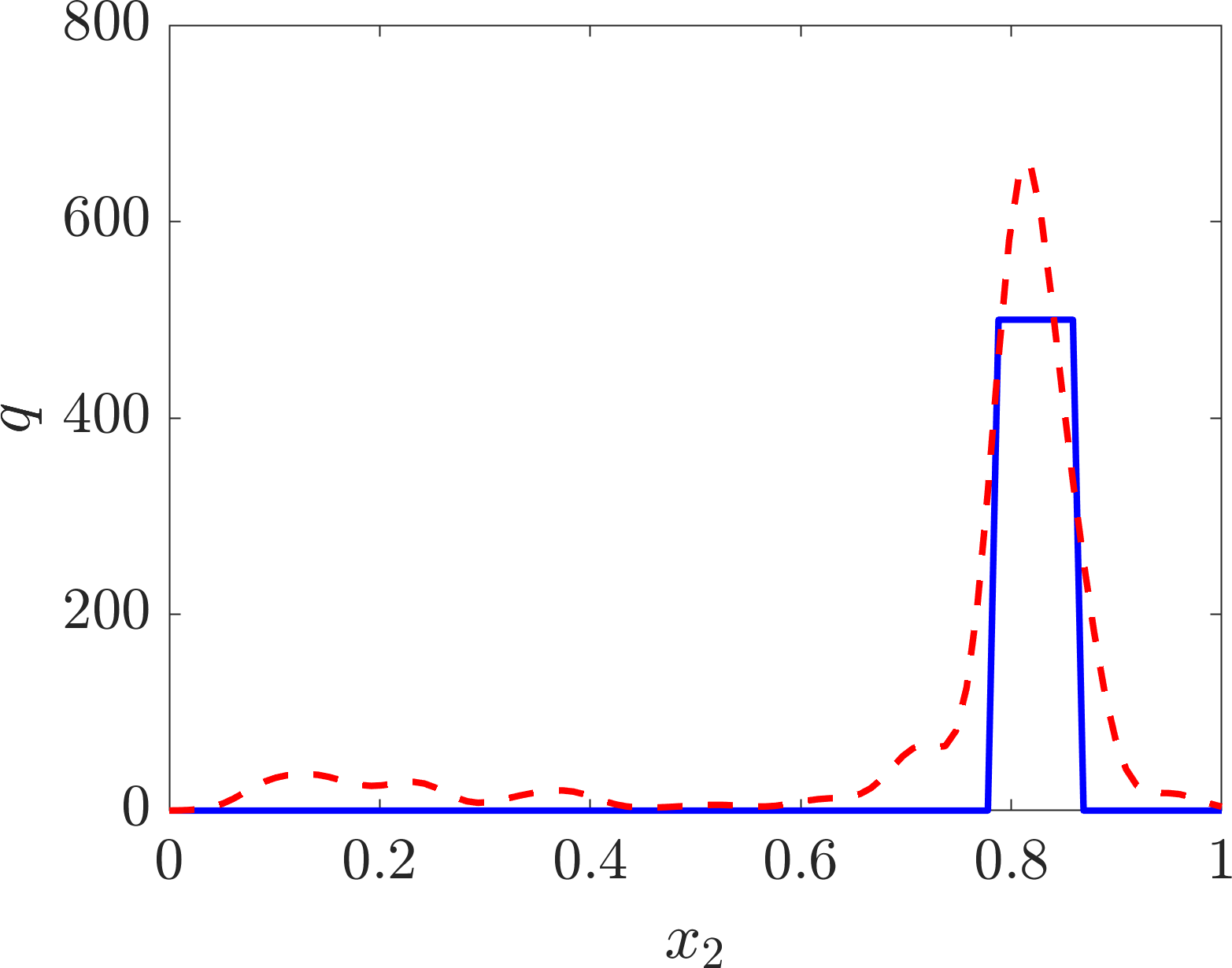} 
\end{tabular}
\caption{Potential estimate quality control: vertical slices of the target potential 
$q^{\text{\scriptsize 2inc}}$ (solid blue lines) and its estimates (dashed red lines)
for three different values of $x_1$. Top row: $q^{\text{\scriptsize est}}_{\bS}$;
middle row: $q^{\text{\scriptsize est}}_{\bT}$;
bottom row: $q^{\text{\scriptsize est}}_{\text{\scriptsize FWI}}$.}
\label{fig:rectschrodoptbslc}
\end{figure}

In order to provide quality control of potential estimates, we show in 
Figure~\ref{fig:rectschrodoptbslc} vertical slices of both the target potential 
$q^{\text{\scriptsize 2inc}}$ and all three estimates at three different locations shown 
in the top left plot in Figure~\ref{fig:rectschrodoptbi} as dashed yellow lines. Confirming our 
conclusions above, the vertical slices show an excellent agreement between $q^{\text{\scriptsize 2inc}}$
and $q^{\text{\scriptsize est}}_{\bS}$ while emphasizing some deficiencies of 
$q^{\text{\scriptsize est}}_{\bT}$ and $q^{\text{\scriptsize est}}_{\text{\scriptsize FWI}}$.
In particular, the middle leftmost and rightmost plots shows the artifacts affecting 
$q^{\text{\scriptsize est}}_{\bT}$, while the bottom middle plot points to the poor recovery of 
the circular scatterer in $q^{\text{\scriptsize est}}_{\text{\scriptsize FWI}}$.

Overall, the $\bS$-variant of Algorithm~\ref{alg:gn} provides the best estimate 
$q^{\text{\scriptsize est}}_{\bS}$. This leads to a question of why should the $\bT$-variant 
be considered if it provides somewhat worse results numerically. This aspect is discussed in the next
section.

\section{Conclusions and future work}
\label{sec:conclude}

We successfully extended the approaches of \cite{tataris2023reduced,TristanAndreas23ROM} for
solving numerically the inverse scattering problem for the scattering potential of Schr\"{o}dinger  
using data-driven ROM to the cases of two and more spatial dimensions. We obtained the expressions
for computing the mass and stiffness matrices of Schr\"{o}dinger problem in Galerkin framework
from the knowledge of frequency domain data measured on the boundary of the domain of interest.
Algorithm~\ref{alg:gn} for estimating the scattering potential was developed, implemented and
tested numerically displaying a potential for high quality estimates that outperform the conventional
frequency domain FWI both in terms of contrast and spatial accuracy. Algorithm~\ref{alg:gn} 
admits two variants ($\bS$ and $\bT$) depending on which ROM matrix is fitted during the 
optimization process. While the numerical studies in Section~\ref{sec:num} suggest the advantage 
of the $\bS$-variant, we believe that having two variants of Algorithm~\ref{alg:gn} is beneficial 
for future research, as discussed below.

The obvious and important next step in ROM-based inversion for coefficients of wave PDEs in 
the frequency domain is the extension of the techniques presented here from Schr\"{o}dinger
equation to Helmholtz equation. The main difference between the two is that the presence of 
inclusions in the medium has both the kinematic effect (changes in travel times) and scatters 
the probing waves in the Helmholtz case, while the kinematics in the Schr\"{o}dinger case is fixed, 
i.e., the speed of propagation is constant. This leads to various kinds of complications for estimating
the coefficient of the Helmholtz equation, including the effects like cycle-skipping that lead to 
severe non-convexity of conventional optimization formulations like \eqref{eqn:lsdata}
or \eqref{eqn:funfwi}. The studies of time-domain ROM-based inversion in 
\cite{borcea2020reduced,borcea2023waveform1,druskin2016direct,druskin2018nonlinear} 
suggest that in the presence of kinematic effects, the $\bT$-variant of a method similar to 
Algorithm~\ref{alg:gn}, i.e., minimization of an objective like \eqref{eqn:funtr}, should have 
an advantage over minimizing \eqref{eqn:funsr}. Studying this question as well as adapting the 
techniques presented here to the Helmholtz case remains our immediate research priority.

%

\appendix
\section{Proofs}
\label{app:proofs}

\begin{proof}[Proof of Proposition \ref{prop:mij}]
From the weak form \eqref{eqn:weaksom} with $k=k_i$ we obtain
\begin{align}
    \int_\Omega \overline{\nabla u^{(r)}_i} \cdot \nabla {v} \; d \bx + 
\int_{\Omega} q \overline{u^{(r)}_i} v d \bx - 
k^2_i \int_\Omega \overline{u^{(r)}_i} v d \bx + 
\imath k_i \int_{\partial \Omega} \overline{u^{(r)}_i} v d\Sigma = 
\int_{\partial \Omega} \overline{p_r} v d\Sigma,
\end{align}
for $r = 1,\ldots,m$.
Taking $v=u^{(s)}_j$, $s = 1,\ldots,m$, as a test function for $i \neq j$, we use 
\eqref{eqn:sij}--\eqref{eqn:mij}, \eqref{eqn:dj} and \eqref{eqn:bblocks} to get
\begin{align}
[\bs_{ij}]_{rs}-k^2_i  [\bm_{ij}]_{rs}+\imath  k_i [\bb_{ij}]_{rs}= [\bd_j]_{rs}.
\label{eqn:App-romm1}
\end{align}
Similarly, for $k = k_j$ we have
\begin{align}
[\bs_{ji}]_{sr}-k^2_j  [\bm_{ji}]_{sr}+\imath  k_j [\bb_{ji}]_{sr}= [\bd_i]_{sr}.
\label{eqn:App-romm1a}
\end{align}
Taking the complex conjugate of \eqref{eqn:App-romm1a} and using \eqref{eqn:mat3hermitblock} gives
\begin{align}
[\bs_{ij}]_{rs}-k^2_j  [\bm_{ij}]_{rs}-\imath  k_j [\bb_{ij}]_{rs}= \overline{[\bd_i]}_{sr}.
\label{eqn:App-romm2}
\end{align}
Subtracting \eqref{eqn:App-romm2} from \eqref{eqn:App-romm1} we obtain
\begin{align}
- k^2_i  [\bm_{ij}]_{rs}+k^2_j  [\bm_{ij}]_{rs}+\imath  k_i [\bb_{ij}]_{rs}+\imath  k_j [\bb_{ij}]_{rs}
= [\bd_j]_{rs}-\overline{[\bd_i]}_{sr},
\end{align}
implying
\begin{align}
(k^2_j - k^2_i) [\bm_{ij}]_{rs} + \imath ( k_i + k_j ) [\bb_{ij}]_{rs} 
= [\bd_j]_{rs} - \overline{[\bd_i]}_{sr}.
\end{align}
Therefore,
\begin{align}
[\bm_{ij}]_{rs} = \frac{\overline{[\bd_{i}]}_{sr} - [\bd_{j}]_{rs}}{k^2_i - k^2_j} 
- \imath \frac{[\bb_{ij}]_{rs} }{k_j - k_i}, \quad r,s=1,...,m, \quad i \neq j.
\label{eqn:mijrs}
\end{align}
Since the matrices $\bd_{i}$ are complex-symmetric according to Propositon~\ref{prop: complex symmetric data}, 
we have
\begin{align}
\overline{[\bd_{i}]}_{sr} = \overline{[\bd_{i}]}_{rs}, \quad r, s = 1,\ldots,m, 
\end{align}
hence
\begin{align}
\overline{\bd_{i}} = \overline{\bd_{i}}^T=\bd_{i}^*, \quad i = 1, \ldots, n.
\label{eqn:bdis}
\end{align}
Thus, \eqref{eqn:mijrs} becomes
\begin{align}
\bm_{ij} = \frac{\bd_{i}^* - \bd_{j}}{k^2_i - k^2_j} 
- \imath \frac{\bb_{ij} }{k_j - k_i}, \quad i \neq j.
\end{align}
\end{proof}

\begin{proof}[Proof of Proposition \ref{prop:mjj}]
For the diagonal blocks of the mass matrix we first write
\begin{align}
\bm_{ii}=\lim_{k_j \to k_i}\frac{\bm_{ij}+\bm_{ji}}{2}.
\label{eqn:miilim}
\end{align}
We also split the blocks of the mass matrix into $\bm_{ij}=\bm_{ij}^{(1)} + \bm_{ij}^{(2)}$, where
\begin{align}
\bm_{ij}^{(1)} = \frac{\bd_{i}^* - \bd_{j}}{k^2_i - k^2_j}, 
\label{eqn:mij1}
\end{align}
and
\begin{align}
\bm_{ij}^{(2)} =  -\imath \frac{\bb_{ij}}{k_j-k_i}.
\label{eqn:mij2}
\end{align}
To take the limit \eqref{eqn:miilim} we first consider the contribution of \eqref{eqn:mij1}. 
Using \eqref{eqn:bdis}, we write
\begin{align}
    \left[\bm_{ij}^{(1)} \right]_{rs} + \left[ \bm_{ji}^{(1)} \right]_{rs} & = 
    \frac{\overline{[\bd_{i}]}_{rs} - [\bd_{j}]_{rs}}{k^2_i - k^2_j} +  
    \frac{\overline{[\bd_{j}]}_{rs} - [\bd_{i}]_{rs}}{k^2_j - k^2_i} \\
    & = 
    \frac{[\bd_{j}]_{rs}-\overline{[\bd_{i}]}_{rs}+\overline{[\bd_{j}]}_{rs} - [\bd_{i}]_{rs}}{k^2_j - k^2_i} \\
 & =  \frac{1}{k_j+k_i}\frac{[\bd_{j}]_{rs}- [\bd_{i}]_{rs}
      +\overline{[\bd_{j}]}_{rs}-\overline{[\bd_{i}]}_{rs} }{k_j - k_i}.
\end{align}
As $k_j\to k_i$, we observe that
\begin{align}
2 \bm_{ii}^{(1)} = 
\frac{1}{2k_i}  \Big( \partial_k \bd_{i} + \overline{\partial_k \bd_{i}} \Big) = 
\frac{1}{k_i}\Re \big( \partial_k \bd_i \big), \quad i = 1,\ldots,n.
  \label{eqn:mii1}
\end{align}
For the contribution of \eqref{eqn:mij2} we have
\begin{align}   
\left[\bm_{ij}^{(2)} \right]_{rs} + \left[ \bm_{ji}^{(2)} \right]_{rs}  = 
- \imath \frac{[\bb_{ij}]_{rs}}{k_j-k_i}- \imath \frac{[\bb_{ji}]_{rs}}{k_i-k_j}.
\end{align}
Since $[\bb_{ij}]_{rs}=\int\limits_{\partial \Omega} \overline{u^r(\bx;k_i}){u^s(\bx;k_j )} d\Sigma$, we write 
\begin{align*}
- \left[\bm_{ij}^{(2)} \right]_{rs} - \left[ \bm_{ji}^{(2)} \right]_{rs} = 
   \frac{\imath}{k_j-k_i} \int\limits_{\partial \Omega} \overline{u^r(\bx;k_i)}{u^s(\bx; k_j )}d\Sigma +
   \frac{\imath}{k_i-k_j} \int\limits_{\partial \Omega}\overline{u^r(\bx; k_j)}{u^s(\bx; k_i)}d\Sigma = \\
     =  \frac{\imath}{k_j-k_i} \int\limits_{\partial \Omega}
     \Big( \overline{u^r(\bx;k_i)}\{{u^s(\bx;k_j)}-{u^s(\bx;k_i )}\} 
     - {u^s(\bx;k_i)}\{ \overline{u^r(\bx;k_j)-u^r(\bx;k_i)}\} \Big) d\Sigma.
\end{align*}
As $k_j\to k_i$, we obtain
\begin{align}
    -\lim_{j\to i} 
    \left( \left[\bm_{ij}^{(2)} \right]_{rs} + \left[ \bm_{ji}^{(2)} \right]_{rs} \right)=
    \imath \int_{\partial \Omega}\Big( 
\overline{u^r(\bx;k_i) }{\partial_k u^s(\bx;k_i)}-{u^s(\bx;k_i)} \overline{\partial_k u^r(\bx;k_i)}  \Big)d\Sigma,
\end{align}
hence
\begin{align}
     2 \left[\bm_{ii}^{(2)} \right]_{rs} =  \imath \int_{\partial \Omega}\Big( 
-\overline{u^r(\bx;k_i) }{\partial_ku^s(\bx;k_i)}+{u^s(k_i,x)} \overline{\partial_k u^r(\bx;k_i)}  \Big)d\Sigma,
\end{align}
which gives
\begin{align}
     2 \bm_{ii}^{(2)} = \imath \bc_{i}, \quad i = 1,\ldots,n.
     \label{eqn:mii2}
\end{align}
Combining \eqref{eqn:mii1} and \eqref{eqn:mii2}, we obtain that
\begin{align}
    \bm_{ii} = \frac{1}{2k_i} \Re \big( \partial_k \bd_i \big) +
    \frac{\imath}{2} \bc_{i}, \quad i=1,...,n.
\end{align}

\end{proof}
\begin{proof}[Proof of Proposition \ref{prop:sij}]
We recall from \eqref{eqn:App-romm1} for $i\neq j$ that
\begin{align}
[\bs_{ij}]_{rs}-k^2_i  [\bm_{ij}]_{rs}+\imath  k_i [\bb_{ij}]_{rs}= [\bd_j]_{rs},
\end{align}
which we multiply by $k_j^2$ to obtain
\begin{align}
k^2_j [\bs_{ij}]_{rs}-k^2_j k^2_i  [\bm_{ij}]_{rs}+\imath  k^2_j k_i [\bb_{ij}]_{rs}= k^2_j [\bd_j]_{rs}.
\label{eqn:App-romm3}
\end{align}
Similarly,
\begin{align}
k^2_i [\bs_{ij}]_{rs}-k^2_i k^2_j  [\bm_{ij}]_{rs}-\imath k^2_i k_j [\bb_{ij}]_{rs}= k^2_i \overline{[\bd_i]}_{sr}.
\label{eqn:App-romm4}
\end{align}
Subtracting \eqref{eqn:App-romm4} from \eqref{eqn:App-romm3} we have
\begin{align}
    k^2_j [\bs_{ij}]_{rs}-k^2_i [\bs_{ij}]_{rs}+\imath  k^2_j k_i [\bb_{ij}]_{rs}+\imath k^2_i k_j [\bb_{ij}]_{rs}= k^2_j [\bd_j]_{rs}- k^2_i \overline{[\bd_i]}_{sr},
\end{align}
implying
\begin{align}
[\bs_{ij}]_{rs} & = \frac{k^2_j [\bd_j]_{rs}- k^2_i \overline{[\bd_i]}_{sr}}{k^2_j-k^2_i} 
- \frac{\imath  k^2_j k_i [\bb_{ij}]_{rs}+\imath k^2_i k_j [\bb_{ij}]_{rs}}{k^2_j-k^2_i} \\
& = \frac{ k^2_i \overline{[\bd_i]}_{sr}-k^2_j [\bd_j]_{rs}}{k^2_i-k^2_j} 
- \imath\frac{  k^2_j k_i + k^2_i k_j }{k^2_j-k^2_i}[\bb_{ij}]_{rs}, \quad r,s = 1,\ldots,m, \ i \neq j.
\end{align}
Using \eqref{eqn:bdis} yields the desired result.
\end{proof}
\begin{proof}[Proof of Proposition \ref{prop:sjj}]
Introducing $\bff_j = k^2_j \mathbf d_j$, $j = 1,\ldots,n$, the entries of the stiffness matrix blocks 
can be written as
\begin{align}
    [\bs_{ij}]_{rs}= \frac{ \overline{[\mathbf \bff_i]}_{rs}- [\mathbf \bff_j]_{rs}}{k^2_i-k^2_j}-\imath\frac{k^2_j k_i + k^2_i k_j}{k_i+k_j}\frac{  [\bb_{ij}]_{rs}}{k_i-k_j}, \quad r,s=1,\ldots,m,
\end{align}
for $i \neq j$. 
Similarly to the proof of Proposition~\ref{prop:mjj}, we can obtain that
\begin{equation}
\begin{split}
2 [\bs_{ii}]_{rs} = & 
\frac{1}{k_i}\Big([\partial_k \mathbf  \bff_i]_{rs}+ \overline{[\partial_k \mathbf  \bff_i]}_{rs} \Big) + \\
& \frac{\imath 2k_i^{3}}{2k_i}\int_{\partial \Omega}\Big( 
-\overline{u^r(\bx;k_i) } {\partial_k u^s(\bx;k_i)}+{u^s(\bx;k_i)} \overline{\partial_k u^r(\bx;k_i)}  \Big)d\Sigma,
\end{split}
\end{equation}
hence
\begin{equation}
\begin{split}
   2 [\bs_{ii}]_{rs} = &\frac{1}{2k_i} \Big( k_i^2 [\partial_k \bd_{i}]_{rs} +k^2_i[ \bd_{i}]_{rs}+ 
    k_i^2 \overline{[\partial_k \bd_{i}]}_{sr} + k^2_i \overline{[ \bd_{i}]}_{sr} \Big)+\\
    & \imath k_i^{2}\int_{\partial \Omega}\Big( - \overline{u^r(\bx;k_i) } {\partial_k u^s(\bx;k_i)} + 
    {u^s(\bx;k_i)} \overline{\partial_k u^r(\bx;k_i)}  \Big) d\Sigma,
\end{split}
\end{equation}
for $i = 1,\ldots,n$.
Simplifying and using the definition \eqref{eqn:cblocks} we find
\begin{align} 
    \bs_{ii} = \frac{1}{2} \Big( k_i \Re ( \partial_k \bd_i )+ 2 \Re(\bd_i) \Big) 
+ \frac{\imath k_i^2}{2} \bc_i, \quad i=1,\ldots,n.
\end{align}
\end{proof}

\section{Proof of Theorem~\ref{thm:uderiv}}
\label{app:diff}
Before proving the desired result, we recall the following variant of the implicit function theorem, 
see, e.g., \cite{Connolly_1990,168}.
\begin{theorem}
Consider $F: \R \times H \to W$, where $H$ and $W$ are Banach spaces. 
Assume that there exists an open set $I \subset \R$ such that for every 
$k \in I$ there exists a unique $u = u(k)\in H$ such that  
\begin{align}
F(k, u)=0. 
\label{B1 Appendinx}
\end{align}
Then if 
\begin{align}
F : \R \times H\to W
\end{align} 
is continuous, and
\begin{align}
\partial_u F : \R \times H\to W
\end{align} 
is continuous, and
\begin{align}
\fa k \in I: \; (\partial_u F(k,u))^{-1} : W \to H
\end{align} 
exists and is bounded, then there exists a continuous map such that
\begin{align}
I \ni k\mapsto u(k) \in H.
\end{align}
Also, if $\partial_k F$ is continuous, we obtain that $u$ is Frech\'{e}t-differentiable 
and its derivative at $k_0$ is given by
\begin{align}
[\partial_k u](k_0) = - \left\{ \partial_u F(k_0,u(k_0)) \right\}^{-1} \partial_k F(k_0,u(k_0)).
\label{eqn:dkuk0}
\end{align}
\end{theorem}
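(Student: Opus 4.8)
The plan is to treat this statement as a regularity result for the already-existing solution branch $k \mapsto u(k)$ rather than as a local existence result, since existence and uniqueness of $u(k)$ for each $k \in I$ are assumed outright. The engine of the proof is a parametrized Banach fixed-point argument, localized around a fixed base point, followed by identification of the locally constructed branch with $u(k)$ via the assumed uniqueness. First I would establish continuity of $k \mapsto u(k)$, and only then, under the extra hypothesis on $\partial_k F$, upgrade to Fréchet differentiability.

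To prove continuity at an arbitrary $k_0 \in I$, abbreviate $L_0 = \partial_u F(k_0, u(k_0))$, which is boundedly invertible by hypothesis, and define $\Phi_k(u) = u - L_0^{-1} F(k, u)$, whose fixed points in $u$ are exactly the zeros of $F(k, \cdot)$. Since $\partial_u \Phi_k(u) = I - L_0^{-1} \partial_u F(k, u)$ vanishes at $(k_0, u(k_0))$, continuity of $\partial_u F$ forces $\|\partial_u \Phi_k(u)\| \le \tfrac12$ on a small ball $\overline{B}(u(k_0), \delta)$ for all $k$ in a neighborhood $J$ of $k_0$. Shrinking $J$ so that $\|\Phi_k(u(k_0)) - u(k_0)\| = \|L_0^{-1} F(k, u(k_0))\|$ is at most $\delta/2$ (using continuity of $F$ in $k$ together with $F(k_0, u(k_0)) = 0$), one checks by the triangle inequality that $\Phi_k$ maps the ball into itself and is a contraction there, uniformly in $k \in J$. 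The Banach fixed-point theorem yields a unique fixed point $\hat u(k)$ in the ball, and the contraction-with-parameter estimate
\[
\|\hat u(k) - u(k_0)\| \le \frac{1}{1 - \tfrac12} \, \|L_0^{-1} F(k, u(k_0))\|
\]
shows $\hat u(k) \to u(k_0)$ as $k \to k_0$. Because $u(k)$ is the unique global zero of $F(k, \cdot)$, it coincides with $\hat u(k)$ for $k \in J$, so $u$ is continuous at $k_0$.

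Next, assuming $\partial_k F$ is also continuous, so that $F$ is jointly $C^1$, I would upgrade continuity to differentiability. The estimate above, combined with $\|F(k, u(k_0))\| = \|F(k, u(k_0)) - F(k_0, u(k_0))\| = O(|k - k_0|)$ coming from the $C^1$ regularity of $F$, supplies the a priori Lipschitz bound $\|u(k) - u(k_0)\| = O(|k - k_0|)$. Writing $h = k - k_0$ and applying the first-order Taylor expansion of $F$ at $(k_0, u(k_0))$ together with $F(k_0 + h, u(k_0 + h)) = 0$ and $F(k_0, u(k_0)) = 0$, I obtain
\[
\partial_u F(k_0, u(k_0)) \big( u(k_0 + h) - u(k_0) \big) = - \partial_k F(k_0, u(k_0)) \, h + o\big( |h| + \|u(k_0 + h) - u(k_0)\| \big).
\]
Applying $L_0^{-1}$ and using the Lipschitz bound to absorb the remainder into $o(|h|)$ yields precisely the derivative formula \eqref{eqn:dkuk0}.

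The main obstacle I anticipate is the quantitative invertibility bookkeeping in the continuity step: converting the pointwise invertibility of $L_0$ and the continuity of $\partial_u F$ into a uniform contraction estimate on a single ball valid for all $k$ near $k_0$, and in particular securing the self-mapping property of $\Phi_k$. Once that localization is in place, the remaining work—identifying the branch through uniqueness and controlling the Taylor remainder for differentiability—is routine.
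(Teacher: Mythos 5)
Your proof is correct, but note that the paper does not actually prove this statement: it is recalled as a known variant of the implicit function theorem with citations to the literature, and only \emph{applied} afterwards to the operator $F^{(s)}$ of \eqref{eqn:opf}. So there is no in-paper argument to compare against; what you have supplied is a self-contained proof along the standard contraction-mapping lines. Your key structural observation --- that global existence and uniqueness of $u(k)$ are hypotheses, so the fixed-point construction is only needed to produce a \emph{continuous} local branch which is then identified with $u(k)$ by uniqueness --- is exactly the right way to exploit the particular form of the statement, and it spares you the usual work of proving local solvability. The quantitative steps all check out: the map $\Phi_k(u)=u-L_0^{-1}F(k,u)$ has $\partial_u\Phi_{k_0}(u(k_0))=0$, so joint continuity of $\partial_u F$ gives the uniform $\tfrac12$-Lipschitz bound on a ball via the mean value inequality, the self-mapping property follows from your triangle-inequality estimate, and the a priori bound $\|u(k)-u(k_0)\|\le 2\|L_0^{-1}F(k,u(k_0))\|=O(|k-k_0|)$ is precisely what is needed to absorb the Taylor remainder $o\bigl(|h|+\|u(k_0+h)-u(k_0)\|\bigr)$ into $o(|h|)$ and obtain \eqref{eqn:dkuk0}. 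Two small points worth making explicit if you write this up: the codomain of $\partial_u F$ should be $\mathcal{L}(H,W)$ with continuity in the operator norm (the statement as printed says $W$, which is a typo you are implicitly correcting), and the joint first-order Taylor expansion you invoke requires the standard fact that continuity of both partial derivatives implies total (Fr\'{e}chet) differentiability of $F$ on $\R\times H$ --- true in Banach spaces, but it deserves a one-line citation or proof.
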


To prove Theorem~\ref{thm:uderiv} we take 
\begin{align}
F^{(s)}: \; (k,u) \in \R_+ \times H^1(\Omega) \to {H^1(\Omega)'}, \quad s = 1,\ldots,m,
\end{align}
defined by \eqref{eqn:opf}. Observe that for \hl{every open} interval $I\subset \R_+$, 
relation \eqref{B1 Appendinx} holds by the well posedness of the forward problem.
Thus, we have
\begin{align}
\partial_u F^{(s)} \left( k_0, u^{(s)}(k_0) \right) = 
\mathcal S - k_0^2 \mathcal M + \imath k_0\mathcal B
\label{eqn:dufs}
\end{align}
and
\begin{align}
\partial_k F^{(s)} \left( k_0, u^{(s)}(k_0) \right) = 
(\mathcal - 2 k_0 \mathcal M +\imath \mathcal B) u^{(s)}(k_0).
\label{eqn:dkfs}
\end{align}
Plugging \eqref{eqn:dufs} and \eqref{eqn:dkfs} into \eqref{eqn:dkuk0}, we get
\begin{align}
\left[ \partial_k u^{(s)} \right](k_0) = 
\left( \mathcal S - k_0^2 \mathcal M + \imath k_0 \mathcal B \right)^{-1} 
\left( 2 k_0 \mathcal M u^{(s)} (k_0) + \imath \mathcal B u^{(s)}(k_0) \right).
\end{align}
Thus, in order to compute $w^{(s)} = \partial_k u^{(s)}(k_0)$ one has to solve 
\eqref{eqn:schrodderiv} with boundary condition \eqref{eqn:bcderiv}.

\section{Proof sketch of Theorem \ref{thm:fwdpde}}
\label{app:fwdpde}
Here we sketch the proof of Theorem \ref{thm:fwdpde}. 
We follow a setting similar to that in \cite{Wald2018,tataris2023reduced}. 
For that reason we study the complex conjugate version of \eqref{eqn:weaksom}, 
that is to find $u^{(s)}\in H^1(\Omega)$ such that for any $v \in H^1(\Omega)$ it satisfies
\begin{equation}
\int_\Omega {\nabla u^{(s)}} \cdot \overline {\nabla {v}} \; d \bx + 
\int_{\Omega} q {u^{(s)}}\overline{ v} d \bx - 
k^2 \int_\Omega {u^{(s)}} \overline{v }d \bx - 
\imath k \int_{\partial \Omega} {u^{(s)}} \overline{v} d\Sigma = 
\int_{\partial \Omega} {p_s} \overline{v} d\Sigma.
\label{eqn:weaksomconj}
\end{equation}

We begin by defining the forms 
$a_1,a_2:H^1(\Omega)^2\to \C$ as
\begin{align}
a_1(u,v) & =\int_\Omega \nabla u \cdot \cl {\nabla v} d\bx-\imath k\int_{\partial \Omega} u\cl vd\Sigma, \\
a_2(u,v)& =-\int_\Omega u\overline{v}d\bx +\frac{1}{k^2}\int_\Omega q u\overline{v}d\bx, 
\end{align}
for $u, v \in H^1(\Omega)$.
We note that $a_1$ is coercive and bounded, while $a_2$ is bounded. 

Let us define the linear Riesz isomorphism
\begin{align}
    \Phi: H^1(\Omega)\to \overline{H^1(\Omega)}', 
\end{align}
as $\Phi u= \langle u,\cdot \rangle_{\overline{H^1(\Omega)}'}$ for $u \in H^1(\Omega)$, and with $\overline{H^1(\Omega)}'$ denoting the antidual of the space $H^1(\Omega)$.
Since $a_1(u,\cdot)$ is an antilinear functional on $H^1(\Omega)$, 
and using the Lax-Milgram lemma, we define $\mathcal T:H^1(\Omega)\to H^1(\Omega)$ 
as
\begin{equation}
a_1(u,v) = \langle \mathcal Tu, v \rangle_{\overline{H^1(\Omega)}'},
\end{equation}
with $\mathcal T$ being one-to-one and onto. 

We also define the linear operator 
$\mathcal W: L^2(\Omega) \to \overline{H^1(\Omega)}'$, 
$u \stackrel{\mathcal W }{\mapsto} a_2(u,\cdot)$,
and two linear maps
\begin{align}
\mathcal A_1 & = \mathcal T^{-1} \Phi^{-1} \mathcal W: L^2(\Omega) \to H^1(\Omega),  \\
\mathcal A & = \mathcal A_1 \circ id_{H^1(\Omega) \to L^2(\Omega)}: 
H^1(\Omega) \stackrel{c}{\subspace} L^2(\Omega)\to H^1(\Omega) , \quad s \mapsto \mathcal A_1 s,
\end{align}
where $id_{H^1(\Omega) \to L^2(\Omega)}$ is the compact embedding operator of $H^1(\Omega)$ into $L^2(\Omega).$
Also, for $v \in H^1(\Omega)$ and $w \in H^1(\Omega)$, we have $a_1(\mathcal Av,w)=a_2(v,w).$
We claim that $\mathcal I+k^2 \mathcal A$ is one-to-one.

Finding a solution of \eqref{eqn:weaksomconj} is equivalent to finding $u^{(s)}\in H^1(\Omega)$ 
such that for all $v \in H^1(\Omega)$ it satisfies
\begin{align}
a_1(u^{(s)},v)+k^2 a_2(u^{(s)},v) & = \langle  P^{(s)},v\rangle_{\overline{H^1(\Omega)}'} \iff \\
a_1(u^{(s)},v)+k^2 a_1(\mathcal Au^{(s)}, v) & = \langle P^{(s)},v\rangle_{\overline{H^1(\Omega)}'} \iff \\
a_1(u^{(s)} + k^2\mathcal Au^{(s)}, v) & = \langle  P^{(s)},v\rangle_{\overline{H^1(\Omega)}'} \iff \\
\left\langle \mathcal T \left( u^{(s)} + k^2 \mathcal Au^{(s)} \right), v \right\rangle_{\overline{H^1(\Omega)}'} & 
= \langle P^{(s)}, v\rangle_{\overline{H^1(\Omega)}'},
\end{align}
hence
\begin{align}
\Phi \mathcal T (\mathcal I+k^2\mathcal A)u^{(s)} & \stackrel{\overline{H^1(\Omega)}'}{=}P^{(s)} \iff  \\
(\mathcal I+k^2 \mathcal A) u^{(s)} & = \mathcal T^{-1} \Phi^{-1} P^{(s)} \in H^1(\Omega).
\label{eqn:operus}
\end{align}
Since $\mathcal A \in \mathcal{L}(H^1(\Omega),H^1(\Omega))$ is compact and $\mathcal I+k^2\mathcal A$ is injective, using the Fredholm alternative we obtain that there exists a unique element $u^{(s)}\in H^1(\Omega) $ that satisfies \eqref{eqn:operus}. We also obtain the forward stability estimate
\begin{align*}
    \| u^{(s)} \|_{H^1(\Omega)} \leq & 
    \| (\mathcal I+k^2\mathcal A)^{-1}\|_{\mathcal{L}(H^1(\Omega),H^1(\Omega))}  
    \| \mathcal T^{-1}\|_{\mathcal{L}(H^1(\Omega),H^1(\Omega))} \\
    & \| \Phi^{-1}\|_{\mathcal{L} (H^1(\Omega),\overline{H^1(\Omega)}')} 
    \| P^{(s)} \|_{\overline{H^1(\Omega)}'}.
\end{align*}
The above argument establishes that for all $v \in H^1(\Omega)$, a unique weak solution of 
\eqref{eqn:weaksomconj} exists. Therefore, by taking the complex conjugate of 
\eqref{eqn:weaksomconj}, we conclude that there exists a unique solution of \eqref{eqn:weaksom}.

\bibliographystyle{siamplain}
\bibliography{bibliography}
\end{document}